\documentclass[reqno]{amsart}
\usepackage[latin1]{inputenc}
\usepackage[pdftex]{graphicx}
\usepackage{graphics}
\usepackage{graphicx}
\usepackage{epstopdf}
\usepackage[pdftex,hyperindex]{hyperref}
\usepackage{floatflt}
\usepackage{multicol}
\usepackage[T1]{fontenc}
\usepackage{textcomp}
\usepackage{latexsym}
\usepackage{expdlist}
\usepackage{vmargin}
\usepackage{booktabs}
\usepackage{placeins}
\usepackage{array}
\usepackage{amsmath}
\usepackage{amssymb}
\usepackage{amsfonts}
\usepackage{verbatim}
\usepackage{array}
\usepackage{framed}
\usepackage{amsthm}
\usepackage{enumerate}
\usepackage{cite}
\pdfcompresslevel=9
\input amssym.def
\input amssym
\textwidth 14cm
\textheight 21cm
\oddsidemargin 4cm
\evensidemargin 4cm
\topskip 0cm
\headheight 0cm
\topmargin 2.5cm
\setlength{\voffset}{-40pt}
\sloppy

\theoremstyle{plain}
\newtheorem{theorem}{Theorem}[section]
\newtheorem{lemma}{Lemma}[section]
\newtheorem{corollary}{Corollary}[section]
\newtheorem{proposition}{Proposition}[section]
\theoremstyle{remark}

\theoremstyle{definition}

\newcommand\id{\mathrm{id}}

\allowdisplaybreaks[1]

\begin{document}

\title{Harmonic analysis of little $q$-Legendre polynomials}

\author{Stefan Kahler}

\address{Fachgruppe Mathematik, RWTH Aachen University, Pontdriesch 14-16, 52062 Aachen, Germany}

\address{Lehrstuhl A f\"{u}r Mathematik, RWTH Aachen University, 52056 Aachen, Germany}

\address{Department of Mathematics, Chair for Mathematical Modelling, Technical University of Munich, Boltzmannstr. 3, 85747 Garching b. M\"{u}nchen, Germany}

\email{kahler@mathematik.rwth-aachen.de}

\thanks{The research was begun when the author was a guest at Technical University of Munich, and the author gratefully acknowledges support from the graduate program TopMath of the ENB (Elite Network of Bavaria) and the TopMath Graduate Center of TUM Graduate School at Technical University of Munich. The research was continued and completed at RWTH Aachen University. The graphics were made with Maple.}

\date{\today}

\begin{abstract}
Many classes of orthogonal polynomials satisfy a specific linearization property giving rise to a polynomial hypergroup structure, which offers an elegant and fruitful link to Fourier analysis, harmonic analysis and functional analysis. From the opposite point of view, this allows regarding certain Banach algebras as $L^1$-algebras, associated with underlying orthogonal polynomials. The individual behavior strongly depends on these underlying polynomials. We study the little $q$-Legendre polynomials, which are orthogonal with respect to a discrete measure. We will show that their $L^1$-algebras have the property that every element can be approximated by linear combinations of idempotents. This particularly implies that these $L^1$-algebras are weakly amenable (i. e., every bounded derivation into the dual module is an inner derivation), which is known to be shared by any $L^1$-algebra of a locally compact group; in the polynomial hypergroup context, weak amenability is rarely satisfied and of particular interest because it corresponds to a certain property of the derivatives of the underlying polynomials and their (Fourier) expansions w.r.t. the polynomial basis. To our knowledge, the little $q$-Legendre polynomials yield the first example of a polynomial hypergroup whose $L^1$-algebra is weakly amenable and right character amenable but fails to be amenable. As a crucial tool, we establish certain uniform boundedness properties of the characters. Our strategy relies on the Fourier transformation on hypergroups, the Plancherel isomorphism, continued fractions, character estimations and asymptotic behavior.
\end{abstract}

\keywords{Orthogonal polynomials, Fourier analysis on hypergroups, $L^1$-algebras on hypergroups, weak amenability, little $q$-Legendre polynomials, continued fractions}

\subjclass[2020]{Primary 33D45; Secondary 42C10, 43A30, 43A62}

\maketitle

\numberwithin{equation}{section}

\section{Introduction}\label{sec:intro}

\subsection{Motivation}

One of the most famous results of mathematics, the `Banach--Tarski paradox', states that any ball in $d\geq3$ dimensions can be split into a finite number of pieces in such a way that these pieces can be reassembled into two balls of the original size. It is also well-known that there is no analogue for $d\in\{1,2\}$, and the Banach--Tarski paradox heavily relies on the axiom of choice \cite{Wa93}. As soon as one accepts the axiom of choice, the paradox becomes no longer paradoxical at all; nevertheless, its proof can be regarded as an essential motivation for a large and important part of harmonic analysis, dealing with \textit{amenability} and related concepts. Originally formulated for groups, in the 1970s amenability notions were extended to Banach algebras (where the link is provided by the group algebra $L^1(G)$ associated with a locally compact group $G$ \cite{Jo72}).\\

In this paper, we consider Banach algebras which come from orthogonal polynomials and can be regarded as $L^1$-algebras of certain hypergroups. Our focus will lie on aspects concerning orthogonal polynomials and Banach algebras, whereas the hypergroup structure appears in the background; to make the paper more self-contained, and for the convenience of the reader, the relevant basics concerning hypergroups and their Fourier analysis will be recalled below (preknowledge on hypergroups is not required). Moreover, the Banach algebra operations under consideration can be formulated in a rather elementary way via polynomial expansions, avoiding additional hypergroup notation. Roughly speaking, hypergroups generalize locally compact groups by allowing the convolution of two Dirac measures to be a probability measure which satisfies certain compatibility and non-degeneracy properties but does not have to be a Dirac measure again. The precise axioms can be found in \cite{BH95}; they considerably simplify if the hypergroup is discrete \cite{La05}, which will always be the case in this paper.\\

The paper studies the class of little $q$-Legendre polynomials $(R_n(x;q))_{n\in\mathbb{N}_0}$, whose definition will be recalled below and which is interesting for several reasons: on the one hand, the orthogonalization measures are purely discrete, which makes these polynomials rather different from their limiting cases $q\to1$ (which are just the well-known Legendre polynomials). On the other hand, the polynomials (or, more precisely, associated recurrence coefficients and operators) are accompanied by certain convergence and compactness properties. We study the characters $\mathbb{N}_0\rightarrow\mathbb{C}$, $n\mapsto R_n(x;q)$, where $x$ lies in the support of the orthogonalization measure. Using methods from continued fractions, we show that the nontrivial characters satisfy certain uniform boundedness properties. These properties and Fourier analysis on hypergroups will enable us to show that the corresponding $L^1$-algebras are spanned by their idempotents, which has important consequences concerning amenability properties. Besides their role in abstract harmonic analysis, certain amenability properties here correspond to very concrete properties of the derivatives of the polynomials and associated (Fourier) expansions (and are therefore interesting with respect to special functions). The study of amenability properties in the context of (polynomial or different) hypergroups is a fruitful field; recent publications deal with generalizations of Reiter's condition \cite{Ku20}, with F{\o}lner type conditions \cite{Al17}, with amenability vs. harmonic $L^p$-functions \cite{NS20} and with `property A' \cite{TAA22}, for instance. Other recent publications deal with generalized notions of biprojectivity and biflatness in the hypergroup context \cite{EMR19,SRSB22}. A recent paper of the author \cite{Ka21b} studies amenability properties for the large class of associated symmetric Pollaczek polynomials, which are very different from the little $q$-Legendre polynomials.

\subsection{Underlying setting and basics about polynomial hypergroups and their Fourier analysis}

We now give a more detailed description of the underlying setting. Let $a_0>0$, $b_0<1$, $(a_n)_{n\in\mathbb{N}},(c_n)_{n\in\mathbb{N}}\subseteq(0,1)$ and $(b_n)_{n\in\mathbb{N}}\subseteq[0,1)$ satisfy $a_n+b_n+c_n=1\;(n\in\mathbb{N}_0)$, where we set $c_0:=0$, and let the polynomials $(P_n(x))_{n\in\mathbb{N}_0}\subseteq\mathbb{R}[x]$ satisfy the three-term recurrence relation $P_0(x)=1$, $P_1(x)=(x-b_0)/a_0$,
\begin{equation}\label{eq:threetermrec}
P_1(x)P_n(x)=a_n P_{n+1}(x)+b_n P_n(x)+c_n P_{n-1}(x)\;(n\in\mathbb{N}).
\end{equation}
It is obvious that $\mathrm{deg}\;P_n(x)=n$. Due to Favard's theorem and well-known uniqueness results from the theory of orthogonal polynomials \cite{Ch78}, $(P_n(x))_{n\in\mathbb{N}_0}$ is orthogonal w.r.t. a unique probability (Borel) measure $\mu$ on the real line which satisfies $\lvert\mathrm{supp}\;\mu\rvert=\infty$, so
\begin{equation*}
\int_\mathbb{R}\!P_m(x)P_n(x)\,\mathrm{d}\mu(x)\neq0\Leftrightarrow m=n.
\end{equation*}
Moreover, the (Fourier) expansions of $(P_n(x))_{n\in\mathbb{N}_0}$ w.r.t. the bases $\{P_k(x):k\in\mathbb{N}_0\}$ are of the form
\begin{equation*}
P_m(x)P_n(x)=\sum_{k=\lvert m-n\rvert}^{m+n}g(m,n;k)P_k(x)\;(m,n\in\mathbb{N}_0)
\end{equation*}
with $g(m,n;\lvert m-n\rvert),g(m,n;m+n)\neq0$ \cite{La05}. Obviously, $(P_n(x))_{n\in\mathbb{N}_0}$ is normalized by $P_n(1)\equiv1$, so one always has $\sum_{k=\lvert m-n\rvert}^{m+n}g(m,n;k)=1$. It is also well-known that the zeros of the $P_n(x)$ are real and located in the interior of the convex hull of $\mathrm{supp}\;\mu$ \cite{Ch78}.\\

In the following, we always assume that the linearization coefficients $g(m,n;k)$ are nonnegative; according to the literature, we call this condition `property (P)'. Given a concrete orthogonal polynomial sequence, establishing or disproving property (P) can be very challenging (see Gasper's important results concerning Jacobi polynomials \cite{Ga70a,Ga70b} and the author's recent extension to generalized Chebyshev polynomials \cite{Ka21a}, for instance, or the author's recent result concerning the class of associated symmetric Pollaczek polynomials \cite{Ka21b}). Property (P) is crucial for the following link between special functions and harmonic and functional analysis: if one defines a convolution from $\mathbb{N}_0\times\mathbb{N}_0$ into the convex hull of the Dirac functions on $\mathbb{N}_0$ by $(m,n)\mapsto\sum_{k=\lvert m-n\rvert}^{m+n}g(m,n;k)\delta_k$, and if one combines this with the identity as involution $\mathbb{N}_0\rightarrow\mathbb{N}_0$, then $(P_n(x))_{n\in\mathbb{N}_0}$ induces a commutative discrete hypergroup with unit element $0$ on $\mathbb{N}_0$.\footnote{$\delta$ with a subscript shall always denote the corresponding Dirac function.} These `polynomial hypergroups' were introduced by Lasser \cite{La83} and are generally very different from groups or semigroups. Since property (P) is satisfied by many orthogonal polynomial sequences $(P_n(x))_{n\in\mathbb{N}_0}$ (and since the individual behavior strongly depends on $(P_n(x))_{n\in\mathbb{N}_0}$), there is an abundance of examples. Many concepts of harmonic analysis take a rather concrete form. As announced above, we briefly recall some basics and, if not stated otherwise, refer to \cite{La83,La05}.
\begin{itemize}
\item If $f:\mathbb{N}_0\rightarrow\mathbb{C}$ is an arbitrary function and if $n\in\mathbb{N}_0$, then the translation $T_n f:\mathbb{N}_0\rightarrow\mathbb{C}$ of $f$ by $n$ reads
\begin{equation*}
T_n f(m)=\sum_{k=\lvert m-n\rvert}^{m+n}g(m,n;k)f(k).
\end{equation*}
If one normalizes the corresponding Haar measure (cf. equation \eqref{eq:haarbackground} below) in such a way that $\{0\}$ is mapped to $1$, it is just the counting measure on $\mathbb{N}_0$ weighted by the values of the Haar function $h:\mathbb{N}_0\rightarrow[1,\infty)$,
\begin{equation*}
h(n):=\frac{1}{g(n,n;0)}=\frac{1}{\int_\mathbb{R}\!P_n^2(x)\,\mathrm{d}\mu(x)}.
\end{equation*}
$h$ is also given via
\begin{equation*}
h(0)=1,\;h(1)=\frac{1}{c_1},\;h(n+1)=\frac{a_n}{c_{n+1}}h(n)\;(n\in\mathbb{N}).
\end{equation*}
For any $p\in[1,\infty)$, let $\ell^p(h):=\{f:\mathbb{N}_0\rightarrow\mathbb{C}:\left\|f\right\|_p<\infty\}$ with $\left\|f\right\|_p:=\left(\sum_{k=0}^\infty\lvert f(k)\rvert^p h(k)\right)^{1/p}$. Furthermore, let $\ell^\infty(h):=\ell^\infty$. If $p\in[1,\infty]$, $f\in\ell^p(h)$ and $n\in\mathbb{N}_0$, then $T_n f\in\ell^p(h)$. Moreover, if $f\in\ell^1(h)$ and $n\in\mathbb{N}_0$, then
\begin{equation}\label{eq:haarbackground}
\sum_{k=0}^{\infty}T_n f(k)h(k)=\sum_{k=0}^{\infty}f(k)h(k).
\end{equation}
\item For $f\in\ell^p(h)$ and $g\in\ell^q(h)$ with $p\in[1,\infty]$ and $q:=p/(p-1)\in[1,\infty]$, the convolution $f\ast g:\mathbb{N}_0\rightarrow\mathbb{C}$ is defined by
\begin{equation*}
f\ast g(n):=\sum_{k=0}^\infty T_n f(k)g(k)h(k).
\end{equation*}
This convolution is an extension of the hypergroup convolution (see above) and commutes, i.e., one has $f\ast g=g\ast f\in\ell^\infty$ \cite{Ka15,La05}.\footnote{We note that results from \cite{Ka15} can also be found in \cite{Ka16b}.} If $f\in\ell^1(h)$, then $f\ast g\in\ell^q(h)$ with $\left\|f\ast g\right\|_q\leq\left\|f\right\|_1\left\|g\right\|_q$. Together with $\left\|.\right\|_1$, $\ast$ and complex conjugation, the set $\ell^1(h)$ becomes a semisimple commutative Banach $\ast$-algebra with unit $\delta_0$, the `$L^1$-algebra' associated with $(P_n(x))_{n\in\mathbb{N}_0}$. Acting via convolution, $\ell^\infty$ is the dual module of $\ell^1(h)$ \cite{La07,La09b}.\footnote{In the references \cite{La07,La09a,La09b}, which are cited frequently in this paper, the additional assumption $b_0\geq0$ was made, but none of the cited results becomes false if this (meaningless) condition is dropped. Furthermore, the class studied in this paper satisfies $b_0\geq0$ anyway.}
\item Let
\begin{equation*}
\mathcal{X}^b(\mathbb{N}_0):=\left\{z\in\mathbb{C}:\sup_{n\in\mathbb{N}_0}\lvert P_n(z)\rvert<\infty\right\}=\left\{z\in\mathbb{C}:\max_{n\in\mathbb{N}_0}\lvert P_n(z)\rvert=1\right\}
\end{equation*}
(where the latter equality is not obvious but always valid), and let $\widehat{\mathbb{N}_0}:=\mathcal{X}^b(\mathbb{N}_0)\cap\mathbb{R}$. $\mathcal{X}^b(\mathbb{N}_0)$ corresponds to the structure space $\Delta(\ell^1(h))$ via the homeomorphism $\mathcal{X}^b(\mathbb{N}_0)\rightarrow\Delta(\ell^1(h))$, $z\mapsto\varphi_z$, $\varphi_z(f):=\sum_{k=0}^\infty f(k)\overline{P_k(z)}h(k)\;(f\in\ell^1(h))$. In the same way, $\widehat{\mathbb{N}_0}$ is homeomorphic to the Hermitian structure space $\Delta_s(\ell^1(h))$. In particular, $\mathcal{X}^b(\mathbb{N}_0)$ and $\widehat{\mathbb{N}_0}$ are compact subsets of $\mathbb{C}$ and $\mathbb{R}$, respectively, which is a consequence of Gelfand's theory. Moreover, one has
\begin{equation*}
\{1\}\cup\mathrm{supp}\;\mu\subseteq\widehat{\mathbb{N}_0}\subseteq[1-2a_0,1].
\end{equation*}
The character $\alpha_z\in\ell^\infty\backslash\{0\}$ which belongs to $z\in\mathcal{X}^b(\mathbb{N}_0)$ is given by
\begin{equation*}
\alpha_z(n):=P_n(z)\;(n\in\mathbb{N}_0).
\end{equation*}
It is easy to see that
\begin{equation*}
T_m\alpha_z(n)=\alpha_z(m)\alpha_z(n)\;(m,n\in\mathbb{N}_0),
\end{equation*}
and it is clear that $(\lvert\alpha_z(n)\rvert)_{n\in\mathbb{N}_0}$ is bounded by $1$. $\alpha_x\in\ell^\infty\backslash\{0\}$ with $x\in\widehat{\mathbb{N}_0}$ is called a symmetric character.
\item For any function $f\in\ell^1(h)$, the Fourier transform $\widehat{f}:\widehat{\mathbb{N}_0}\rightarrow\mathbb{C}$ is given by
\begin{equation*}
\widehat{f}(x)=\sum_{k=0}^\infty f(k)P_k(x)h(k).
\end{equation*}
The Fourier transform $\widehat{f}$ is continuous and satisfies $\left\|\widehat{f}\right\|_\infty\leq\left\|f\right\|_1$, and the Fourier transformation $(\ell^1(h),\left\|.\right\|_1)\rightarrow(\mathcal{C}(\widehat{\mathbb{N}_0}),\left\|.\right\|_{\infty})$, $f\mapsto\widehat{f}$ is continuous and injective. Furthermore, one has $\widehat{f\ast g}=\widehat{f}\;\widehat{g}\;(f,g\in\ell^1(h))$. By the Plancherel--Levitan theorem, one has
\begin{equation*}
\left\|f\right\|_2^2=\left\|\widehat{f}\right\|_2^2=\int_\mathbb{R}\!(\widehat{f}(x))^2\,\mathrm{d}\mu(x)\;(f\in\ell^1(h)).
\end{equation*}
Finally, there is a (unique) isometric `Plancherel isomorphism' $\mathcal{P}:(\ell^2(h),\left\|.\right\|_2)\rightarrow(L^2(\mathbb{R},\mu),\left\|.\right\|_2)$ extending the Fourier transformation, i.e., $\widehat{f}=\mathcal{P}(f)\;(f\in\ell^1(h))$, where equality is meant in the $L^2(\mathbb{R},\mu)$-sense. The corresponding `Plancherel measure' is just the orthogonalization measure $\mu$, and the inverse of $\mathcal{P}$ reads
\begin{equation*}
\mathcal{P}^{-1}(F)(k)=\int_\mathbb{R}\!F(x)P_k(x)\,\mathrm{d}\mu(x)\;(F\in L^2(\mathbb{R},\mu),k\in\mathbb{N}_0).
\end{equation*}
\end{itemize}
Following Nevai \cite{Ne79}, we say that $\mu\in M(1,0)$ if $\lim_{n\to\infty}a_n=\lim_{n\to\infty}c_n=0$ and $\lim_{n\to\infty}b_n=1$.\footnote{We mention that the definition given in \cite{Ne79} is in terms of the recurrence coefficients which are associated with the \textit{orthonormal} version of $(P_n(x))_{n\in\mathbb{N}_0}$; however, relating these recurrence coefficients to $(a_n)_{n\in\mathbb{N}_0}$, $(b_n)_{n\in\mathbb{N}_0}$ and $(c_n)_{n\in\mathbb{N}}$ \cite[(6.8)]{La05} we easily see that Nevai's definition is equivalent to ours.}

\subsection{Little q-Legendre polynomials and outline of the paper}

We are interested in the following class: let $q\in(0,1)$. The sequence $(P_n(x))_{n\in\mathbb{N}_0}=:(R_n(x;q))_{n\in\mathbb{N}_0}$ of little $q$-Legendre polynomials which corresponds to $q$ is given by
\begin{align*} a_0=\frac{1}{q+1},\;a_n&=q^n\frac{(1+q)(1-q^{n+1})}{(1-q^{2n+1})(1+q^{n+1})}\;(n\in\mathbb{N}),\\
c_n&=q^n\frac{(1+q)(1-q^n)}{(1-q^{2n+1})(1+q^n)}\;(n\in\mathbb{N}),\\
b_n&\equiv1-a_n-c_n=\begin{cases} \frac{q}{q+1}, & n=0, \\ \frac{(1-q^n)(1-q^{n+1})}{(1+q^n)(1+q^{n+1})}, & \mbox{else} \end{cases}
\end{align*}
or, equivalently, via the normalization $R_n(1;q)\equiv1$ and the discrete orthogonalization measure
\begin{equation}\label{eq:mulittleqleg}
\mu(\{x\})=\begin{cases} q^n(1-q), & x=1-q^n\;\mbox{with}\;n\in\mathbb{N}_0, \\ 0, & \mbox{else}. \end{cases}
\end{equation}
There is also a basic hypergeometric representation, reading
\begin{equation*}
R_n(x;q)={}_2\phi_1\left(\left.\begin{matrix}q^{-n},q^{n+1} \\ q\end{matrix}\right\rvert q,q-q x\right)=\sum_{k=0}^n\frac{(q^{-n};q)_k(q^{n+1};q)_k}{(q;q)_k}\frac{(q-q x)^k}{(q;q)_k}\;(n\in\mathbb{N}_0).
\end{equation*}
Recall that $(a;q)_0=1$, $(a;q)_n=\prod_{k=1}^n(1-a q^{k-1})\;(n\in\mathbb{N})$ and $(a;q)_{\infty}=\prod_{k=1}^{\infty}(1-a q^{k-1})$. Property (P) is always satisfied, i.e., $(R_n(x;q))_{n\in\mathbb{N}_0}$ induces a polynomial hypergroup on $\mathbb{N}_0$. The Haar weights $h(n)$ are of exponential growth and satisfy
\begin{equation}\label{eq:hlittleqleg}
h(n)=\frac{1}{q^n}\frac{1-q^{2n+1}}{1-q}\;(n\in\mathbb{N}_0),
\end{equation}
and one has $\mathcal{X}^b(\mathbb{N}_0)=\widehat{\mathbb{N}_0}=\mathrm{supp}\;\mu=\{1\}\cup\{1-q^n:n\in\mathbb{N}_0\}$ (so the Fourier transforms of elements of $\ell^1(h)$ are defined on a countable set with cluster point $1$). These basics are taken from \cite{KLS10,La05,La09a}. Property (P) was studied and established by Koornwinder \cite{Ko91a,Ko95}. The hypergroup is of `strong compact type' \cite{FLS05}, which means that the operator $T_n-\id$ is compact on the space $\ell^1(h)$ for all $n\in\mathbb{N}_0$ and which yields that $\alpha_{1-q^n}\in\ell^1(h)$ for every $n\in\mathbb{N}_0$ \cite[Proposition 2]{FLS05}. Clearly, $\mu$ is in the Nevai class $M(1,0)$.\\

It is obvious that $\left\|\alpha_{1-q^n}\right\|_2^2\leq\left\|\alpha_{1-q^n}\right\|_1$ for every $n\in\mathbb{N}_0$. In Section~\ref{sec:uboundedness}, we obtain a uniform boundedness result on the characters and shall see that there is a constant $C>0$ (depending on $q$) such that $\left\|\alpha_{1-q^n}\right\|_1\leq C\left\|\alpha_{1-q^n}\right\|_2^2$ for all $n\in\mathbb{N}_0$ (Theorem~\ref{thm:characternorm}, cf. also Figure~\ref{fig:characternorm}). This will be established via asymptotic behavior and via methods coming from continued fractions, and it will enable us to show that the linear span of the idempotents is dense in $\ell^1(h)$ by applying Fourier analysis on polynomial hypergroups in Section~\ref{sec:idempotents} (Theorem~\ref{thm:wamlittleqleg1}). Hence, we will obtain that $\ell^1(h)$ is weakly amenable (Theorem~\ref{thm:wamlittleqleg2}), which means that there are no nonzero bounded derivations from $\ell^1(h)$ into $\ell^{\infty}$.\footnote{In Section~\ref{sec:amenability}, we will recall amenability notions in more detail, including references.} In contrast to the group case, weak amenability is a rather strong condition on (the $L^1$-algebra of) a polynomial hypergroup. There are characterizations via properties of the derivatives of the polynomials and their (Fourier) expansions w.r.t. the basis $\{P_k(x):k\in\mathbb{N}_0\}$. To our knowledge, the little $q$-Legendre polynomials yield the first example of a polynomial hypergroup whose $L^1$-algebra is both weakly amenable and right character amenable but not amenable. Section~\ref{sec:amenability} is devoted to such amenability considerations. In particular, we will compare the little $q$-Legendre polynomials to certain polynomials for which weak amenability was obtained in a very different way in \cite{Ka15}, as well as to their limiting cases $q\to1$, the Legendre polynomials.

\section{Character estimations, continued fractions and uniform boundedness properties}\label{sec:uboundedness}

The main result of this section is a kind of uniform boundedness property in terms of the norms of the characters.\\

\begin{theorem}\label{thm:characternorm}
Let $q\in(0,1)$ and $P_n(x)=R_n(x;q)\;(n\in\mathbb{N}_0)$. Then there is some $C>0$ such that
\begin{equation}\label{eq:characternorm}
0<\left\|\alpha_{1-q^n}\right\|_2^2<\left\|\alpha_{1-q^n}\right\|_1<C\left\|\alpha_{1-q^n}\right\|_2^2
\end{equation}
for all $n\in\mathbb{N}_0$. It is possible to take
\begin{equation}\label{eq:characternormexplicit}
C=\frac{1}{q^{\left\lceil\frac{\log4}{\log\frac{1}{q}}-1\right\rceil}}\left[\frac{1}{1-q}+\frac{1}{1-q^{2\left\lceil\frac{\log4}{\log\frac{1}{q}}-1\right\rceil+1}}\sum_{k=1}^\infty4^k q^{\frac{\left(2\left\lceil\frac{\log4}{\log\frac{1}{q}}-1\right\rceil+k-1\right)k}{2}}\right].
\end{equation}
\end{theorem}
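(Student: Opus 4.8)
The plan is to estimate $\|\alpha_{1-q^n}\|_1=\sum_{k=0}^\infty|R_k(1-q^n;q)|\,h(k)$ from above by splitting the sum at a threshold $k=n$ (or, more precisely, at $k=N$ for a suitable $N$ depending only on $q$) and treating the two ranges differently. For the ``tail'' $k\geq n$, the idea is that $1-q^n$ is close to the endpoint $1$ of the support, where $R_k(1;q)=1$; here I would exploit an explicit formula or a sharp bound for $R_k(1-q^n;q)$ coming from the $_2\phi_1$-representation. In fact, the key observation is that for $x=1-q^n$ the argument $q-qx=q^{n+1}$ of the basic hypergeometric series is \emph{very} small, so the series $\sum_{j=0}^{k}\frac{(q^{-k};q)_j(q^{k+1};q)_j}{(q;q)_j(q;q)_j}q^{(n+1)j}$ is dominated by its first term $1$ plus a geometric-type remainder; combined with the explicit Haar weights \eqref{eq:hlittleqleg}, $h(k)=q^{-k}(1-q^{2k+1})/(1-q)$, one gets $\sum_{k\geq n}|R_k(1-q^n;q)|h(k)\lesssim \sum_{k\geq n} q^{-k}\cdot(\text{something summable in }k)$, and the normalization has to be arranged so that this is controlled by $\|\alpha_{1-q^n}\|_2^2$. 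Since $\|\alpha_{1-q^n}\|_2^2=1/\mu(\{1-q^n\})=q^{-n}/(1-q)$ by the Plancherel theorem and \eqref{eq:mulittleqleg}, what one really must show is $\|\alpha_{1-q^n}\|_1\leq C\,q^{-n}/(1-q)$, i.e.\ the $\ell^1$-norm does not blow up faster than $q^{-n}$; the constant $C$ in \eqref{eq:characternormexplicit} has exactly the shape $q^{-\lceil\cdot\rceil}\bigl[\tfrac{1}{1-q}+\cdots\bigr]$ that such a two-part estimate would produce, with $\lceil\log 4/\log\tfrac1q-1\rceil$ being the threshold index $N$ chosen so that $4q^{N+1}\leq q$ (equivalently $q^N\leq 1/4$), which makes the remainder series $\sum_k 4^k q^{(\cdots)k/2}$ converge.

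For the ``head'' $0\leq k< N$ there are only finitely many terms, each bounded by $h(k)\leq q^{-(N-1)}/(1-q)$ in absolute value (using $|R_k|\leq 1$ on $\widehat{\mathbb N_0}$ and monotonicity of $h$), contributing the first summand $\tfrac{1}{1-q}q^{-(N-1)}$ inside the bracket of \eqref{eq:characternormexplicit}; note this head contribution is a \emph{constant}, which is harmless because $\|\alpha_{1-q^n}\|_2^2=q^{-n}/(1-q)\geq 1/(1-q)$ is bounded below. For $n\leq N$ one can absorb everything into the finite head estimate directly. The continued-fraction machinery advertised in the section title enters, I expect, in making the tail bound rigorous and uniform: rather than estimating $R_k(1-q^n;q)$ term-by-term in the hypergeometric series, one writes the generating function / Stieltjes transform $\sum_k R_k(x;q)h(k)t^k$ as a continued fraction built from the recurrence coefficients $a_n,c_n$ (which decay like $q^n$), evaluates or bounds it at $x=1-q^n$, and reads off $\|\alpha_{1-q^n}\|_1$ as a value of (a truncation of) that continued fraction; the exponential decay $a_nc_{n+1}\sim q^{2n}$ is what forces the continued fraction to converge geometrically and yields the factor $q^{(2N+k-1)k/2}$. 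The left-hand inequalities $0<\|\alpha_{1-q^n}\|_2^2<\|\alpha_{1-q^n}\|_1$ are immediate: positivity is Plancherel, and $\|\cdot\|_2^2\leq\|\cdot\|_1$ follows from $|R_k(1-q^n;q)|\leq 1$, with strictness because $\alpha_{1-q^n}\neq\delta_0$ so at least two terms are nonzero.

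The main obstacle will be the tail estimate: proving that $|R_k(1-q^n;q)|$ decays fast enough in $k$ \emph{relative to the growth }$q^{-k}$\emph{ of }$h(k)$, \emph{uniformly in} $n$, and with an explicit constant. Naively $|R_k|\leq 1$ only gives $\sum_{k\geq n}|R_k(1-q^n;q)|h(k)\lesssim q^{-K}$ where $K=\max(n,\deg)$ — which diverges — so one genuinely needs the smallness of $q-qx=q^{n+1}$ near the right endpoint to beat the Haar growth. Concretely I expect to need an inequality of the form $|R_k(1-q^n;q)|\leq \kappa\, 4^{k} q^{(n+1)k}\cdot(\text{mild factor})$ or a continued-fraction analogue, valid for \emph{all} $k$ and $n$, from which the geometric series $\sum_k 4^k q^{(2N+k-1)k/2}$ emerges after splitting and using $q^n\leq q^N$ on the relevant range. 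Getting the combinatorial constant (the ``$4^k$'', presumably from a crude bound $|(q^{-k};q)_j(q^{k+1};q)_j/((q;q)_j)^2|\leq$ something like $4^k$) exactly right, and checking that the threshold $N=\lceil \log4/\log\tfrac1q-1\rceil$ makes every step go through, is the routine-but-delicate part; everything else is assembly.
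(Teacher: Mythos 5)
Your computation of $\left\|\alpha_{1-q^n}\right\|_2^2=q^{-n}/(1-q)$ via Plancherel and the general shape of the argument (split the $\ell^1$-sum, bound the head by Haar weights, supply a uniform-in-$n$ decay estimate for the tail) do match the paper, but two of your steps would fail. First, the split point: the paper cuts at $k=n+K$ with $K=\lceil\log 4/\log(1/q)-1\rceil$, not at a fixed $N$ depending only on $q$. With a fixed $N$, the range $N\leq k\leq n$ belongs to your ``tail'', yet there $|R_k(1-q^n;q)|$ is not small (it is close to $1$ for $k\ll n$), so $\sum_{k=N}^{n}|R_k(1-q^n;q)|h(k)$ is of order $q^{-n}$ and is certainly not a constant; correspondingly, your reading of the first summand $\tfrac{1}{1-q}$ in \eqref{eq:characternormexplicit} as a ``constant head contribution'' is wrong --- it comes from $\sum_{k=0}^{n+K}h(k)<h(n+K)/(1-q)$ (Lemma~\ref{lma:hestimate}), a quantity of order $q^{-n-K}$, i.e.\ exactly comparable to $\left\|\alpha_{1-q^n}\right\|_2^2$ rather than bounded.

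Second, and more seriously, the tail mechanism you propose cannot work. The actual decay is quadratic-exponential, $|R_{n+k}(1-q^n;q)|\asymp q^{k(k+1)/2}$ (Proposition~\ref{prp:asymptotics}), not of the form $4^kq^{(n+1)k}$, and it sets in only for $k\gtrsim n$. More importantly, it cannot be extracted by term-by-term estimation of the ${}_2\phi_1$ series: at $x=1-q^n$ the $j$-th term has magnitude comparable to $q^{j(n+1-k)+j(j-1)/2}/(q;q)_\infty^2$, which for $k>n$ peaks near $j=k-n$ at size roughly $q^{-(k-n)^2/2}$ --- the individual terms are enormous, and the smallness of $R_k(1-q^n;q)$ is produced entirely by cancellation, which the triangle inequality destroys. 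This is precisely the gap that the paper's Lemma~\ref{lma:characterdecay} fills, and its proof is not a generating-function or Stieltjes-transform computation: it combines the ratio asymptotics of Proposition~\ref{prp:asymptotics} (valid for each fixed $n$ only from some $n$-dependent index on) with Worpitzky's theorem applied to continued fractions whose partial numerators $1/A_n(n+k)$ lie in $(0,1/4)$ for $k\geq K$ (Lemma~\ref{lma:contfracpre}), plus a rigidity and backward-induction argument identifying the ratio sequence $\phi_{n,k}$ with the continued-fraction values $\psi_{n,k}\in[2/3,2]$ all the way down to $k=K$, uniformly in $n$. Without a substitute for that uniform ratio bound, your proof does not close.
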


\begin{figure}
\centering
\begin{minipage}{.5\textwidth}
\centering
\includegraphics[width=1.00\textwidth]{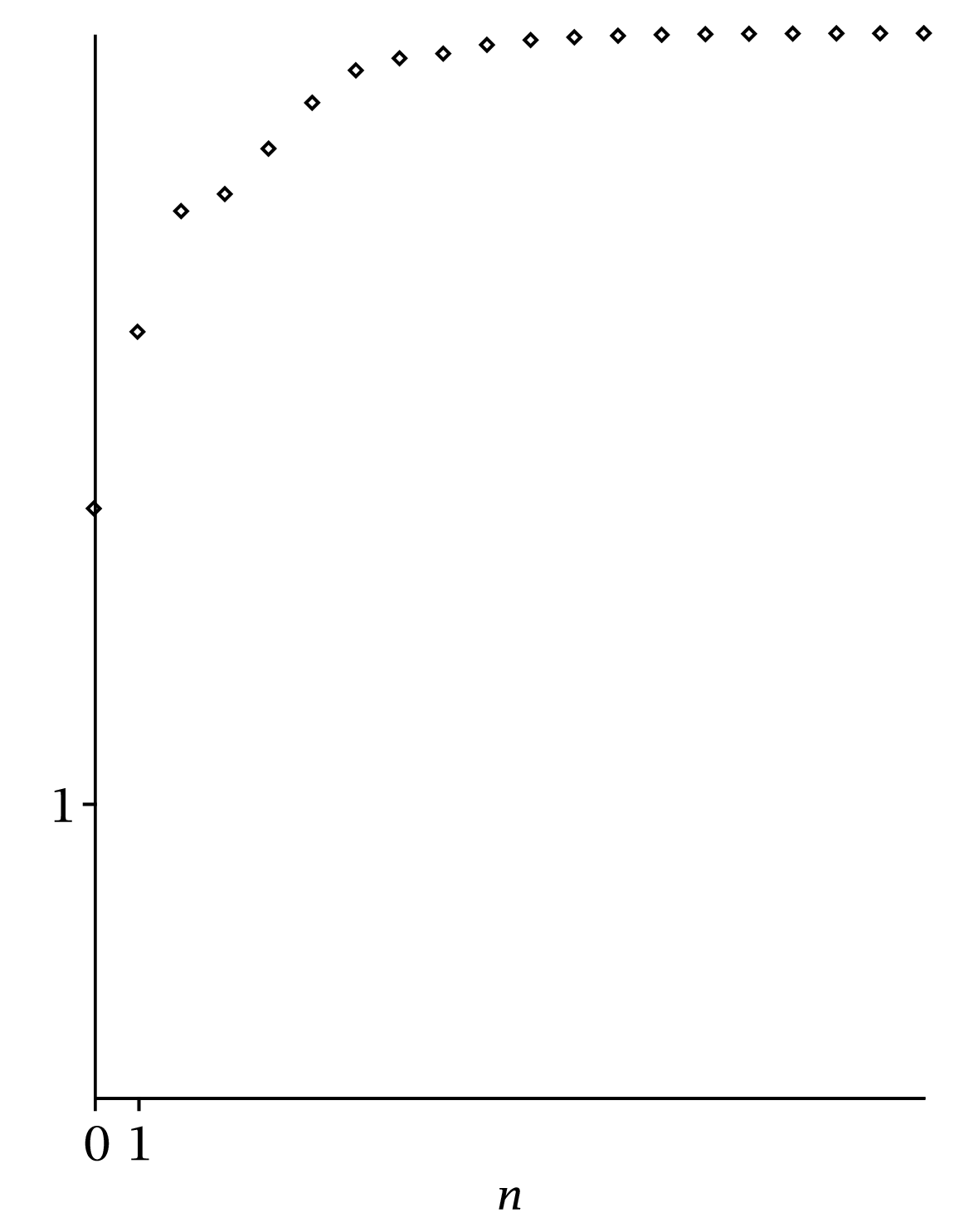}
\end{minipage}%
\begin{minipage}{.5\textwidth}
\centering
\includegraphics[width=1.00\textwidth]{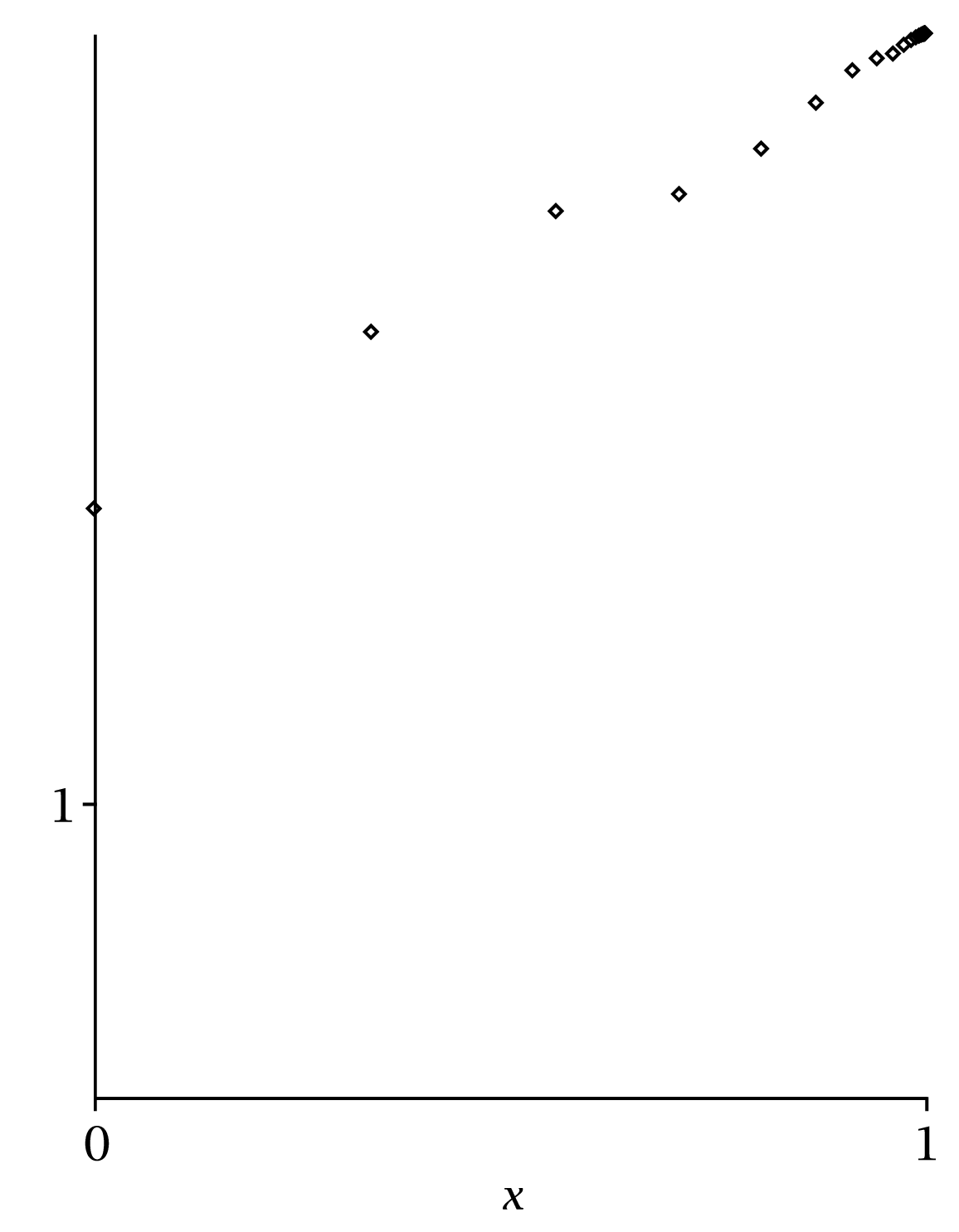}
\end{minipage}
\caption{Left: $\left\|\alpha_{1-q^n}\right\|_1/\left\|\alpha_{1-q^n}\right\|_2^2$ for $n\in\{0,\ldots,19\}$ and $q=2/3$. Right: $\left\|\alpha_x\right\|_1/\left\|\alpha_x\right\|_2^2$ for $x\in\{1-q^n:n\in\{0,\ldots,19\}\}$ and $q=2/3$. The explicit bound provided by Theorem~\ref{thm:characternorm} is $\approx20$.}\label{fig:characternorm}
\end{figure}

Theorem~\ref{thm:characternorm}, which is visualized in Figure~\ref{fig:characternorm} and also essential for Theorem~\ref{thm:wamlittleqleg1} and Theorem~\ref{thm:wamlittleqleg2} in the next sections, relies on another uniform boundedness result given in Lemma~\ref{lma:characterdecay} below. To motivate this crucial lemma, observe that the little $q$-Legendre polynomials $(R_n(x;q))_{n\in\mathbb{N}_0}$ are the little $q$-Jacobi polynomials $(\phi_n^{\alpha,\beta}(1-x))_{n\in\mathbb{N}_0}$ (cf. \cite{IW82}) for $\alpha=\beta=1$; hence, \cite[(1.6)]{IW82} yields the following concerning asymptotics and ratio asymptotics of the characters:

\begin{proposition}\label{prp:asymptotics}
If $q\in(0,1)$ and $P_n(x)=R_n(x;q)\;(n\in\mathbb{N}_0)$, then
\begin{equation*}
\frac{\alpha_{1-q^n}(n+k)}{(-1)^k q^{\frac{k(k+1)}{2}}\frac{(q^{n+1};q)_\infty}{(q;q)_\infty}}\to1\;(k\to\infty)
\end{equation*}
for all $n\in\mathbb{N}_0$. Moreover, for each $n\in\mathbb{N}_0$ the character $\alpha_{1-q^n}$ has at last finitely many zeros and
\begin{equation*}
\frac{\alpha_{1-q^n}(n+k+1)}{\alpha_{1-q^n}(n+k)q^{k+1}}\to-1\;(k\to\infty).
\end{equation*}
\end{proposition}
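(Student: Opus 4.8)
The plan is to derive all three statements from the single asymptotic formula for the little $q$-Jacobi polynomials cited in the excerpt. Because $R_m(x;q)=\phi_m^{1,1}(1-x)$, one has $\alpha_{1-q^n}(m)=R_m(1-q^n;q)=\phi_m^{1,1}(q^n)$ for all $m,n\in\mathbb{N}_0$, so the characters are simply the little $q$-Jacobi polynomials with $\alpha=\beta=1$ evaluated at the mass point $q^n$. I would then invoke \cite[(1.6)]{IW82}, which gives the asymptotic behaviour of $\phi_m^{\alpha,\beta}(x)$ as $m\to\infty$, specialise it to $x=q^n$ and $\alpha=\beta=1$, and put $m=n+k$. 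Collecting the $q$-shifted factorials in the prefactor and simplifying the powers of $q$ (using $0<q<1$, so that $(q;q)_\infty$ and $(q^{n+1};q)_\infty$ are positive), the leading term has to be rearranged into precisely the combination of the sign $(-1)^k$, the factor $q^{k(k+1)/2}$ and the ratio $(q^{n+1};q)_\infty/(q;q)_\infty$; this yields the first displayed limit. The one genuinely nontrivial point in this step is the bookkeeping: one must reconcile \cite{IW82}'s normalisation of $\phi_m^{\alpha,\beta}$ with ours, keep track of the substitution $x\mapsto 1-x$, and make the resulting leading coefficient come out in exactly the stated form.

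Granting the first limit, the finiteness of the zeros is immediate. The normalising sequence $(-1)^k q^{k(k+1)/2}(q^{n+1};q)_\infty/(q;q)_\infty$ is nonzero for every $k$, and the quotient of $\alpha_{1-q^n}(n+k)$ by it tends to $1$; hence there is some $k_0=k_0(n)\in\mathbb{N}_0$ with $\alpha_{1-q^n}(n+k)\neq0$ for all $k\geq k_0$. The remaining indices $0,1,\dots,n+k_0-1$ form a finite set, so $\alpha_{1-q^n}$ vanishes at most finitely often.

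For the ratio asymptotics, fix $n$ and let $k\geq k_0$, so that $\alpha_{1-q^n}(n+k)$ and $\alpha_{1-q^n}(n+k+1)$ are both nonzero and one may write
\begin{align*}
\frac{\alpha_{1-q^n}(n+k+1)}{\alpha_{1-q^n}(n+k)\,q^{k+1}}
={}&\frac{\alpha_{1-q^n}(n+k+1)}{(-1)^{k+1}q^{\frac{(k+1)(k+2)}{2}}\frac{(q^{n+1};q)_\infty}{(q;q)_\infty}}
\cdot\frac{(-1)^{k}q^{\frac{k(k+1)}{2}}\frac{(q^{n+1};q)_\infty}{(q;q)_\infty}}{\alpha_{1-q^n}(n+k)}\\
&{}\cdot\frac{(-1)^{k+1}q^{\frac{(k+1)(k+2)}{2}}}{(-1)^{k}q^{\frac{k(k+1)}{2}}q^{k+1}} .
\end{align*}
The first two factors converge to $1$ by the first limit (applied at $k+1$ and at $k$, respectively), while the third factor equals $-1$ for every $k$, because $\tfrac{(k+1)(k+2)}{2}=\tfrac{k(k+1)}{2}+(k+1)$ makes the powers of $q$ cancel and $(-1)^{k+1}/(-1)^{k}=-1$. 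Hence the quotient tends to $-1$.

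I expect the main obstacle to be purely a matter of conventions: locating the exact statement of \cite[(1.6)]{IW82} and reconciling the competing normalisations (of the polynomials, of the argument via $x\leftrightarrow 1-x$, and of the $q$-Pochhammer symbols) so that the leading term takes precisely the form written in the Proposition. Once that identification is in place no further analysis is needed, since the second and third assertions follow at once from the nonvanishing of the theta-type prefactor and from the elementary identity $\tfrac{(k+1)(k+2)}{2}=\tfrac{k(k+1)}{2}+(k+1)$.
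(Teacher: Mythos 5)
Your proposal is correct and follows essentially the same route as the paper, which likewise treats the proposition as a direct specialization of \cite[(1.6)]{IW82} via the identification $R_m(x;q)=\phi_m^{1,1}(1-x)$, so that $\alpha_{1-q^n}(m)=\phi_m^{1,1}(q^n)$. Your explicit derivation of the finiteness of the zeros and of the ratio asymptotics from the first limit (using the nonvanishing of the normalizing sequence and the identity $\tfrac{(k+1)(k+2)}{2}=\tfrac{k(k+1)}{2}+(k+1)$) is exactly the intended, and correct, bookkeeping that the paper leaves implicit.
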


As a trivial consequence of Proposition~\ref{prp:asymptotics}, for each $n\in\mathbb{N}_0$ there is some $K\in\mathbb{N}_0$ such that $\alpha_{1-q^n}(n+k)\neq0$ and $\left\lvert\alpha_{1-q^n}(n+k+1)/(\alpha_{1-q^n}(n+k)q^{k+1})\right\rvert<4$ for all $k\in\mathbb{N}_0$ with $k\geq K$. The announced lemma improves this by showing that $K$ can be chosen independently of $n$ (so the order of ``$\forall n\in\mathbb{N}_0$'' and ``$\exists K\in\mathbb{N}_0$'' can be changed):

\begin{lemma}\label{lma:characterdecay}
Let $q\in(0,1)$ and $P_n(x)=R_n(x;q)\;(n\in\mathbb{N}_0)$, and let $K\in\mathbb{N}_0$ be given by
\begin{equation*}
K:=\left\lceil\frac{\log4}{\log\frac{1}{q}}-1\right\rceil.
\end{equation*}
Then $\alpha_{1-q^n}(n+k)\neq0$ and
\begin{equation}\label{eq:ratioasymptoticsuni}
\left\lvert\frac{\alpha_{1-q^n}(n+k+1)}{\alpha_{1-q^n}(n+k)q^{k+1}}\right\rvert<4
\end{equation}
for all $n\in\mathbb{N}_0$ and $k\in\mathbb{N}_0$ with $k\geq K$. For every $n\in\mathbb{N}_0$, the sequence $(\alpha_{1-q^n}(n+k))_{k\geq K}$ oscillates around its limit $0$, i.e.,
\begin{equation}\label{eq:oscillating}
\frac{\alpha_{1-q^n}(n+k+1)}{\alpha_{1-q^n}(n+k)}<0\;(k\geq K),
\end{equation}
and the sequence $(\lvert\alpha_{1-q^n}(n+k)\rvert)_{k\geq K}$ is strictly decreasing. Moreover,
\begin{equation}\label{eq:asymptoticsuni}
\lvert\alpha_{1-q^n}(n+K+k)\rvert\leq4^k q^{\frac{(2K+k+1)k}{2}}\lvert\alpha_{1-q^n}(n+K)\rvert\leq4^k q^{\frac{(2K+k+1)k}{2}}
\end{equation}
for all $n\in\mathbb{N}_0$ and $k\in\mathbb{N}_0$.
\end{lemma}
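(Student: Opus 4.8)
\textbf{Proof plan for Lemma~\ref{lma:characterdecay}.}
The plan is to run a continued-fraction argument on the tail of the character $\alpha_{1-q^n}(n+k)=R_{n+k}(1-q^n;q)$, reading the three-term recurrence \eqref{eq:threetermrec} ``from infinity'' rather than from $n=0$. Fix $n$ and write $x=1-q^n$. Setting $r_k:=\alpha_{1-q^n}(n+k+1)/\alpha_{1-q^n}(n+k)$ whenever the denominator is nonzero, the recurrence $P_1(x)P_m(x)=a_m P_{m+1}(x)+b_m P_m(x)+c_m P_{m-1}(x)$ at $m=n+k$ rearranges to
\begin{equation*}
r_k=\frac{P_1(x)-b_{n+k}}{a_{n+k}}-\frac{c_{n+k}}{a_{n+k}}\,\frac{1}{r_{k-1}},
\end{equation*}
so the tail ratios are governed by a continued fraction whose coefficients $a_{n+k},b_{n+k},c_{n+k}$ are, by the explicit little $q$-Legendre formulas, controlled in size \emph{uniformly in $n$}: one has $a_{n+k},c_{n+k}=O(q^{n+k})$ and $b_{n+k}=1-O(q^{n+k})$, while $P_1(1-q^n)=(x-b_0)/a_0=1-(q+1)q^n$ lies in a fixed bounded range. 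The key point is that after substituting $\alpha_{1-q^n}(n+k)=(-1)^k q^{k(k+1)/2}\beta_k$ (the expected asymptotic scale from Proposition~\ref{prp:asymptotics}), the rescaled ratios $s_k:=\beta_{k+1}/\beta_k=-r_k/q^{k+1}$ satisfy a recursion of the form $s_k=u_{n,k}-v_{n,k}/s_{k-1}$ with $u_{n,k}\to1$, $v_{n,k}\to0$ as $k\to\infty$, \emph{and} with these convergences uniform in $n$ because the error terms are all $O(q^{n+k})\le O(q^k)$.

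The second step is to turn this into the uniform bound $|s_k|<4$ for $k\ge K$. I would choose $K$ so that for $k\ge K$ the perturbation terms are small enough — concretely $q^{k+1}<1/4$, i.e. $k\ge K=\lceil \log 4/\log(1/q)-1\rceil$ — and then argue by a forward induction / invariant-region argument: show that the map $s\mapsto u_{n,k}-v_{n,k}/s$ carries the region $\{|s|<4\}$ (or a slightly more careful region bounded away from $0$, e.g. $1/4<|s|<4$ together with a sign condition) into itself, so that once the rescaled ratio enters this region it stays there. The sign statement \eqref{eq:oscillating}, equivalently $s_k>0$, and the nonvanishing $\alpha_{1-q^n}(n+k)\ne0$, come out of the same invariant-region bookkeeping once one checks the base case: here one needs to know that the ratio at $k=K$ already lies in the good region, for which I would either invoke the large-$k$ asymptotics of Proposition~\ref{prp:asymptotics} to pin down the behaviour beyond some $n$-independent index and then handle the finitely many remaining ``small $n+k$'' cases by the explicit sign/size of the $q$-coefficients, or (cleaner) verify directly from the recursion that the region is entered in one step from \emph{any} starting value with the correct sign. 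Strict monotonicity of $(|\alpha_{1-q^n}(n+k)|)_{k\ge K}$ is then immediate: $|\alpha_{1-q^n}(n+k+1)|=q^{k+1}|s_k|\,|\alpha_{1-q^n}(n+k)|<4q^{k+1}|\alpha_{1-q^n}(n+k)|\le|\alpha_{1-q^n}(n+k)|$ by the choice of $K$.

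Finally, the telescoping estimate \eqref{eq:asymptoticsuni} is a direct consequence of \eqref{eq:ratioasymptoticsuni}: for $k\ge1$,
\begin{equation*}
\frac{|\alpha_{1-q^n}(n+K+k)|}{|\alpha_{1-q^n}(n+K)|}=\prod_{j=0}^{k-1}\frac{|\alpha_{1-q^n}(n+K+j+1)|}{|\alpha_{1-q^n}(n+K+j)|}<\prod_{j=0}^{k-1}4\,q^{K+j+1}=4^k q^{kK+\frac{k(k+1)}{2}}=4^k q^{\frac{(2K+k+1)k}{2}},
\end{equation*}
and the case $k=0$ is trivial; the second inequality in \eqref{eq:asymptoticsuni} is just $|\alpha_{1-q^n}(n+K)|\le\max_m|P_m(1-q^n)|=1$, valid since $1-q^n\in\widehat{\mathbb{N}_0}$.

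\textbf{Main obstacle.} The delicate part is the \emph{uniformity in $n$} of entering the invariant region, i.e. controlling the ratio at the threshold index $k=K$ for all $n$ simultaneously. Proposition~\ref{prp:asymptotics} only gives, for each fixed $n$, an $n$-dependent cut-off; promoting this to an $n$-free statement is exactly what forces the continued-fraction / invariant-region analysis rather than a soft asymptotic argument, and the bookkeeping of the error terms $O(q^{n+k})$ versus the gap between $1/4$ and $4$ has to be done with enough slack that a single universal $K$ works. Everything else — the sign oscillation, monotonicity, and the geometric-type bound \eqref{eq:asymptoticsuni} — follows mechanically once \eqref{eq:ratioasymptoticsuni} and \eqref{eq:oscillating} are in hand.
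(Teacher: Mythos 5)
Your overall instinct---read the three-term recurrence from the tail, control the ratios by a continued-fraction mechanism, and treat the uniformity in $n$ as the crux---matches the paper, and your telescoping derivation of \eqref{eq:asymptoticsuni} from \eqref{eq:ratioasymptoticsuni} is exactly right. But the central mechanism you propose contains a genuine gap. First, the rescaled recursion is \emph{not} a vanishing perturbation of $s_k=1$: writing it out, $s_k=\frac{b_{n+k}-P_1(1-q^n)}{a_{n+k}q^{k+1}}-\frac{c_{n+k}}{a_{n+k}q^{2k+1}}\cdot\frac{1}{s_{k-1}}$, and since $b_{n+k}-P_1(1-q^n)\asymp q^{n}$ while $a_{n+k},c_{n+k}\asymp q^{n+k}$, \emph{both} coefficients blow up like $q^{-2k}$ rather than tending to $1$ and $0$. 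The boundedness of $s_k$ is produced by a near-cancellation of two terms each of size $q^{-2k}$, i.e., by the fact that $(\alpha_{1-q^n}(n+k))_k$ is the minimal solution of the recurrence---not by the recursion being a small perturbation of the identity. Second, and consequently, no \emph{forward} invariant-region argument around $\{1/4<|s|<4\}$ can succeed: the bounded fixed point of the forward map $s\mapsto u-v/s$ lying in that region is strongly \emph{repelling} (this expansiveness is precisely what the paper exploits to show that any solution differing from the continued-fraction value must diverge), so the region is not mapped into itself, and ``entered in one step from any starting value with the correct sign'' is false. Your fallbacks for the base case at $k=K$ fail for the same reason you flag as the main obstacle: the index beyond which Proposition~\ref{prp:asymptotics} controls the ratio depends on $n$, and the problematic indices are not ``finitely many small $n+k$''---for $k$ near $K$ and $n$ large, $n+k$ is large.

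What the paper does instead is the backward version of your idea. For every $k\ge K$ it defines the continued fraction $\psi_{n,k}$ built from the quantities $1/A_n(n+k)$, which by Lemma~\ref{lma:contfracpre} lie in $(0,1/4)$ uniformly in $n$ and $k\ge K$; Worpitzky's theorem then gives convergence and the uniform bound $\psi_{n,k}\in[2/3,2]$. The actual ratio sequence $\phi_{n,k}$ (suitably normalized by $B_n(k)$) satisfies the same forward recursion $\phi_{n,k+1}=A_n(n+k)(1-1/\phi_{n,k})$ for $k$ beyond an $n$-dependent threshold $M$, and the expansiveness of that recursion, combined with the boundedness of both sequences, forces $\phi_{n,M}=\psi_{n,M}$; a reverse induction (running the recursion backward, which is the contracting direction) then propagates the identity, the nonvanishing, and the sign information down to $k=K$, after which \eqref{eq:ratioasymptoticsuni} and \eqref{eq:oscillating} follow from $\psi_{n,k}\in[2/3,2]$ and the lower bound $B_n(k)>1/(2q)$. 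To repair your write-up, replace the forward invariant region by exactly this minimal-solution identification plus backward induction.
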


A visualization can be found in Figure~\ref{fig:characterdecay}.\\

\begin{figure}
\centering
\begin{minipage}{.5\textwidth}
\centering
\includegraphics[width=1.00\textwidth]{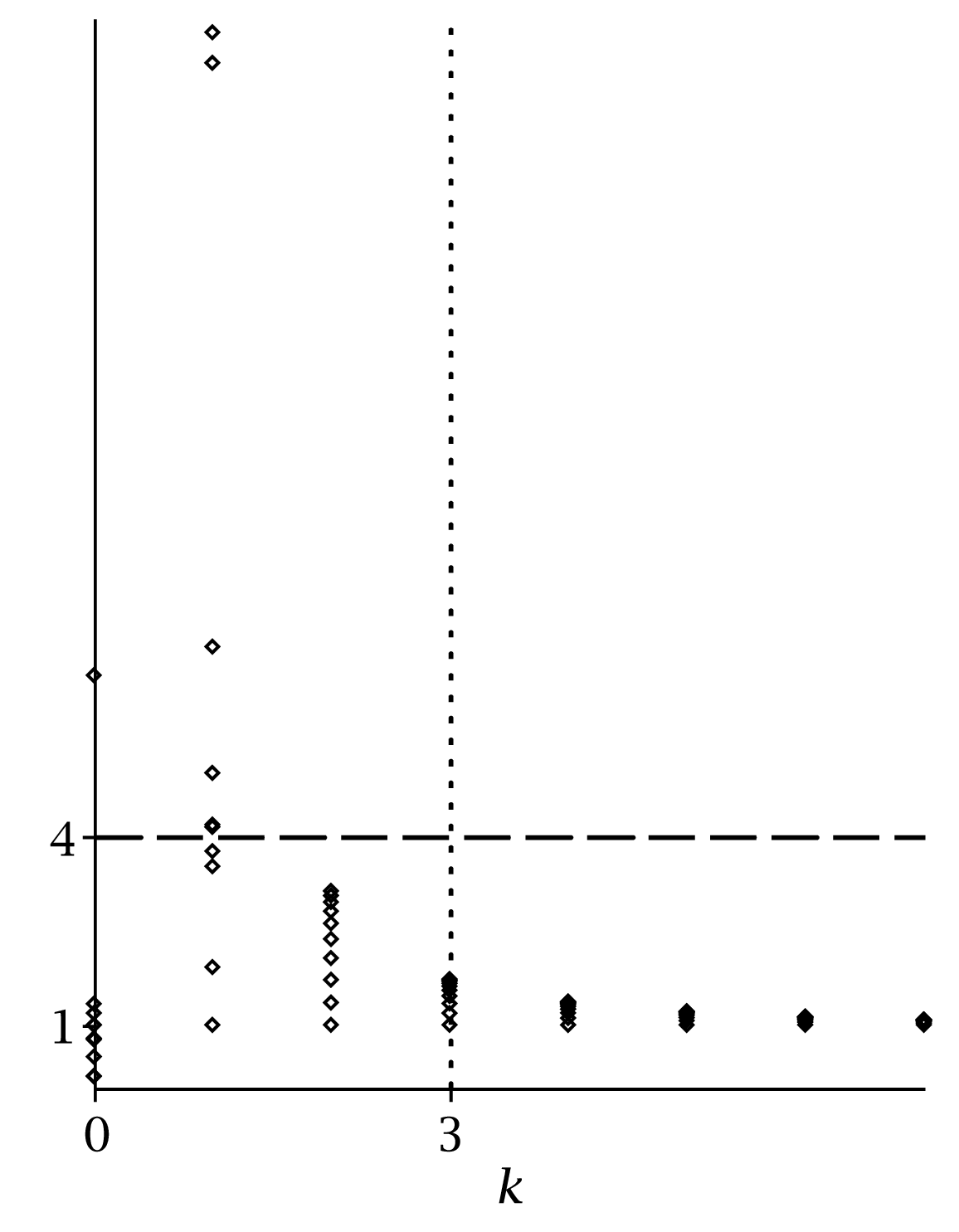}
\end{minipage}%
\begin{minipage}{.5\textwidth}
\centering
\includegraphics[width=1.00\textwidth]{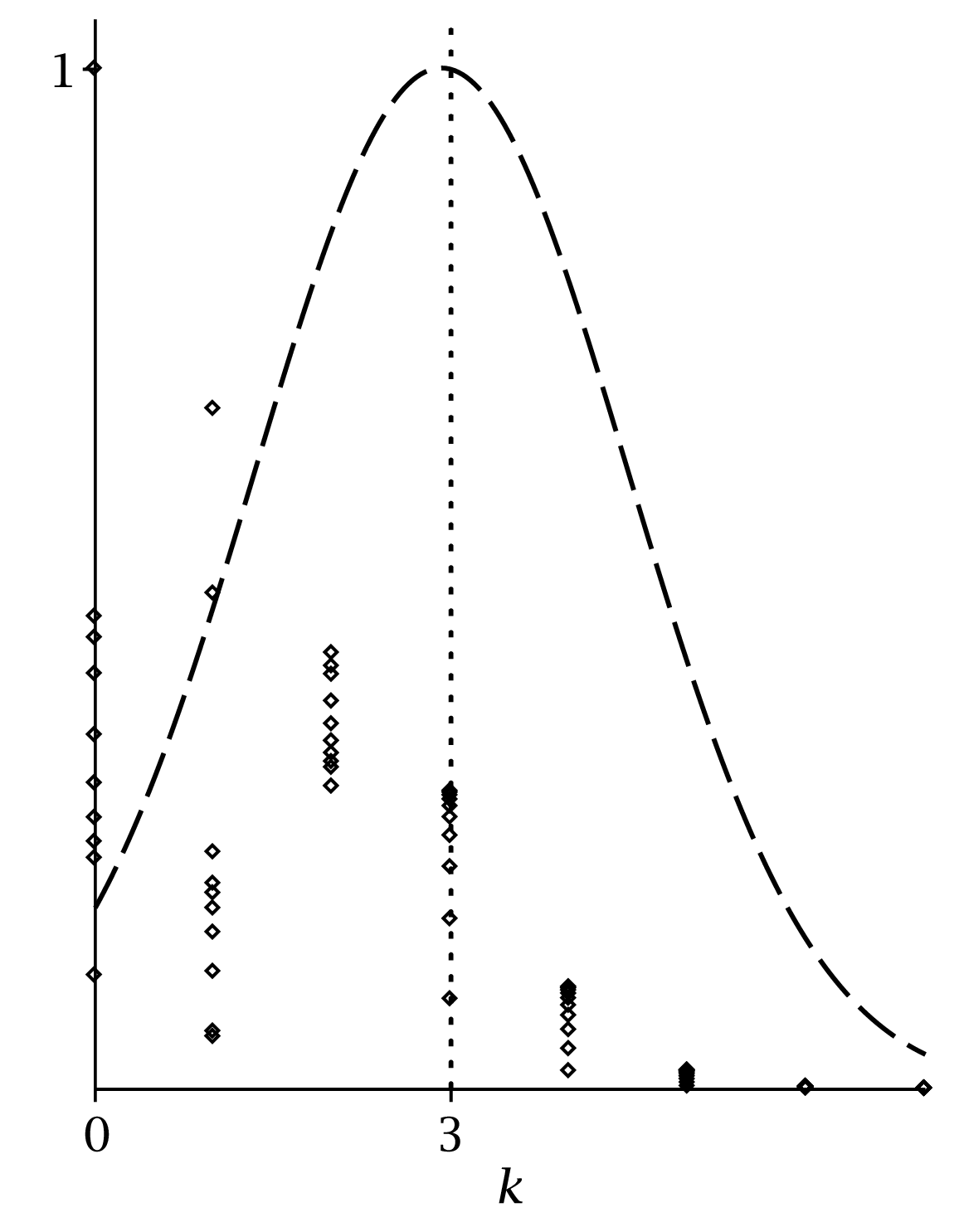}
\end{minipage}
\caption{Left (visualization of \eqref{eq:ratioasymptoticsuni}): $\lvert\alpha_{1-q^n}(n+k+1)/(\alpha_{1-q^n}(n+k)q^{k+1})\rvert$ for $k\in\{0,\ldots,7\}$, $n\in\{0,\ldots,9\}$ and $q=2/3$ (so $K=3$). Right (visualization of \eqref{eq:asymptoticsuni}): $\lvert\alpha_{1-q^n}(n+k)\rvert$ for $k\in\{0,\ldots,7\}$, $n\in\{0,\ldots,9\}$ and $q=2/3$ (so $K=3$); the curve corresponds to the function $x\mapsto4^{x-K}q^{(K+x+1)(x-K)/2}$.}\label{fig:characterdecay}
\end{figure}

The following auxiliary result is needed for the proof of Lemma~\ref{lma:characterdecay}.

\begin{lemma}\label{lma:contfracpre}
Let $q\in(0,1)$ and $P_n(x)=R_n(x;q)\;(n\in\mathbb{N}_0)$. Let
\begin{equation*}
K:=\left\lceil\frac{\log4}{\log\frac{1}{q}}-1\right\rceil
\end{equation*}
and, for all $n,k\in\mathbb{N}_0$,
\begin{equation*}
A_n(k):=\frac{[b_{k+1}-P_1(1-q^n)][b_{k+2}-P_1(1-q^n)]}{a_{k+1}c_{k+2}}
\end{equation*}
and
\begin{equation*}
B_n(k):=\frac{[b_{n+k+1}-P_1(1-q^n)]q^k}{c_{n+k+1}}.
\end{equation*}
Then
\begin{equation}\label{eq:contfracpre1}
A_n(n+k)>4
\end{equation}
and
\begin{equation}\label{eq:contfracpre2}
B_n(k)>\frac{1}{2q}
\end{equation}
for all $n\in\mathbb{N}_0$ and $k\in\mathbb{N}_0$ with $k\geq K$. Moreover,
\begin{equation}\label{eq:contfracpre3}
\lim_{k\to\infty}B_n(k)=\frac{1}{q}.
\end{equation}
\end{lemma}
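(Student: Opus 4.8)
The plan is to reduce everything to explicit estimates on the recurrence coefficients of the little $q$-Legendre polynomials, since $A_n(k)$, $B_n(k)$ are built directly from $a_k, b_k, c_k$ and $P_1(1-q^n)=(1-q^n-b_0)/a_0=1-(q+1)q^n$ (using $a_0=1/(q+1)$, $b_0=q/(q+1)$). So the first step is to substitute the explicit formulas and obtain closed-form rational expressions in powers of $q$. For $B_n(k)$ one gets
\begin{equation*}
b_{n+k+1}-P_1(1-q^n)=\frac{(1-q^{n+k+1})(1-q^{n+k+2})}{(1+q^{n+k+1})(1+q^{n+k+2})}-1+(q+1)q^n,
\end{equation*}
and $c_{n+k+1}=q^{n+k+1}\tfrac{(1+q)(1-q^{n+k+1})}{(1-q^{2n+2k+3})(1+q^{n+k+1})}$; dividing and multiplying by $q^k$ produces a ratio whose leading behaviour as $k\to\infty$ is $q^n\cdot q^{-n-1}=1/q$, which gives \eqref{eq:contfracpre3} immediately once the lower-order terms are shown to vanish. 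The inequality \eqref{eq:contfracpre2} then amounts to checking that this ratio stays above $1/(2q)$, i.e.\ that the correction terms never drag it below half of its limit; I expect this to follow from crude termwise bounds (each factor like $1-q^m$ lies in a controlled interval, and $b_{n+k+1}-P_1(1-q^n)$ is bounded below by something comparable to $(q+1)q^n$ minus small corrections).

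For \eqref{eq:contfracpre1} the key observation is the factor $q^{-(n+k)}$-type growth: writing out
\begin{equation*}
A_n(n+k)=\frac{[b_{n+k+1}-P_1(1-q^n)][b_{n+k+2}-P_1(1-q^n)]}{a_{n+k+1}c_{n+k+2}},
\end{equation*}
and noting $a_{n+k+1}c_{n+k+2}$ carries a factor $q^{n+k+1}\cdot q^{n+k+2}=q^{2n+2k+3}$ while the numerator factors are each $O(1)$ (and bounded below, away from $0$, once we are past the critical index), one sees $A_n(n+k)$ grows like $q^{-(2n+2k+3)}$ times a bounded-below quantity; so for $k\geq K$ with $K=\lceil \tfrac{\log 4}{\log(1/q)}-1\rceil$ chosen precisely so that $q^{-(2K+\ldots)}$ beats $4$, the bound holds. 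The cleanest route is to get a uniform-in-$n$ lower bound of the shape $A_n(n+k)\geq \kappa\, q^{-2k}$ for some constant $\kappa$ (perhaps absorbing the $n$-dependence favourably, since larger $n$ only makes things bigger), and then verify $\kappa q^{-2K}>4$ by the definition of $K$. I would verify the numerator lower bound by showing $b_m-P_1(1-q^n)\geq c\,q^n\geq c\,q^m$ is the wrong direction; rather one wants $b_m - P_1(1-q^n)$ bounded below by an absolute positive constant for $m=n+k+1,n+k+2$ and $k\geq K$, which should hold because $b_m\to 1$ and $P_1(1-q^n)=1-(q+1)q^n\leq 1-(q+1)q^n < 1$ with a definite gap once $k$ is large — here is where the choice of $K$ enters, ensuring $q^{n+k+1}$ is small relative to $q^n$.

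The main obstacle, I expect, is bookkeeping the two-parameter dependence cleanly: all three quantities depend on both $n$ and $k$, and one must extract bounds that are uniform in $n$ while exploiting that the relevant indices $n+k+1$, $n+k+2$ are large precisely because $k\geq K$. The natural device is to set $m:=n+k$ and rewrite $b_{m+1}-P_1(1-q^n)$, $c_{m+1}$, $a_{m+1}$ as functions of $m$ and $n$ separately, observing that the "$m$-part" controls smallness of $q^m$ and the "$n$-part" only helps (the term $(q+1)q^n$ is largest for small $n$, but small $n$ with $k\geq K$ still forces $m\geq K$). One should be slightly careful that monotonicity in $n$ actually goes the helpful way; if not, a two-case split ($n$ small versus $n$ large) will do. Once the substitutions are made and one fixes the convention $P_1(1-q^n)=1-(q+1)q^n$, the inequalities \eqref{eq:contfracpre1} and \eqref{eq:contfracpre2} become elementary estimates on finite products of factors $1\pm q^j$, and \eqref{eq:contfracpre3} is a routine limit, so the real work is organizing the estimates rather than any deep idea.
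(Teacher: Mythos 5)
Your overall strategy (substitute the explicit formulas for $a_m,b_m,c_m$ and $P_1(1-q^n)=1-(q+1)q^n$ and reduce everything to elementary estimates in powers of $q$) is the same as the paper's, and your treatment of $B_n(k)$ and of the limit \eqref{eq:contfracpre3} is sound in outline. But there is a concrete error in your plan for \eqref{eq:contfracpre1}. You claim the numerator factors $b_{n+k+1}-P_1(1-q^n)$ are ``bounded below, away from $0$'' by an absolute positive constant once $k\geq K$, and you explicitly reject the bound $b_m-P_1(1-q^n)\geq c\,q^n$ as ``the wrong direction.'' In fact
\begin{equation*}
b_{n+k+1}-P_1(1-q^n)=(1+q)\,q^n\left[1-\frac{2q^{k+1}}{(1+q^{n+k+1})(1+q^{n+k+2})}\right],
\end{equation*}
which tends to $0$ as $n\to\infty$ for fixed $k$: the ``definite gap'' does not exist, because $b_{n+k+1}\to1$ and $P_1(1-q^n)\to1$ simultaneously. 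The bound you discarded, $b_m-P_1(1-q^n)\geq\frac{1+q}{2}q^n$ for $m\in\{n+k+1,n+k+2\}$ and $k\geq K$, is precisely the correct one; the resulting $q^{2n}$ in the numerator cancels against the $q^{2n}$ carried by $a_{n+k+1}c_{n+k+2}\leq(1+q)^2q^{2n+2k+3}$. Consequently $A_n(n+k)$ does \emph{not} grow like $q^{-(2n+2k+3)}$ times a bounded-below quantity (it stays bounded in $n$ for fixed $k$); the correct statement is $A_n(n+k)\geq\tfrac{1}{4}q^{-2k-3}\geq\tfrac{1}{4q}q^{-2(K+1)}\geq\tfrac{4}{q}>4$, using $q^{K+1}\leq\tfrac14$. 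Note also that your target $\kappa q^{-2K}>4$ is too weak as stated: $q^{-2K}$ can be as small as $16q^2$, so a generic constant $\kappa$ does not suffice; you need the extra $q^{-3}$ (or the factor $q^{-2}$ from shifting $K$ to $K+1$) that the exact cancellation provides.

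For comparison, the paper avoids the two-factor product estimate altogether: it factors $A_n(k)=C_n(k)D_n(k)$ with $C_n(k)=[b_{k+1}-P_1(1-q^n)]/a_{k+1}$ and $D_n(k)=[b_{k+2}-P_1(1-q^n)]/c_{k+2}$, observes that $(b_m)$ is increasing and $a_{k+1}>c_{k+2}$ so that $D_n\geq C_n$ whenever $C_n\geq0$, and then reduces \eqref{eq:contfracpre1} to the single inequality $C_n(n+k)>q^{-(k+1)}-2\geq2$, which is verified by one explicit algebraic identity. Your direct route is salvageable and arguably just as short once the $q^{2n}$ cancellation is tracked correctly, but as written the key lower bound you propose to prove is false.
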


\begin{proof}
For any $n,k\in\mathbb{N}_0$, we decompose $A_n(k)=C_n(k)D_n(k)$ with
\begin{align*}
C_n(k)&:=\frac{b_{k+1}-P_1(1-q^n)}{a_{k+1}},\\
D_n(k)&:=\frac{b_{k+2}-P_1(1-q^n)}{c_{k+2}}.
\end{align*}
For every $k\in\mathbb{N}_0$, one has
\begin{align*}
b_{k+2}-b_{k+1}&=\frac{(1-q^{k+2})(1-q^{k+3})}{(1+q^{k+2})(1+q^{k+3})}-\frac{(1-q^{k+1})(1-q^{k+2})}{(1+q^{k+1})(1+q^{k+2})}\\
&=\frac{2q^{k+1}(1-q^2)(1-q^{k+2})}{(1+q^{k+1})(1+q^{k+2})(1+q^{k+3})}\\
&>0
\end{align*}
and
\begin{equation*}
\frac{a_{k+1}}{c_{k+2}}-1=\frac{1-q^{2k+5}}{q(1-q^{2k+3})}-1=\frac{(1-q)(1+q^{2k+4})}{q(1-q^{2k+3})}>0.
\end{equation*}
Hence, we see that
\begin{equation*}
C_n(k)<D_n(k)
\end{equation*}
provided $n,k\in\mathbb{N}_0$ are such that $C_n(k)\geq0$; so concerning \eqref{eq:contfracpre1} it suffices to show that $C_n(n+k)\geq2$ for all $n\in\mathbb{N}_0$ and $k\in\mathbb{N}_0$ with $k\geq K$. This is indeed true because, for any $n,k\in\mathbb{N}_0$, a tedious calculation yields
\begin{align*}
&\frac{(1-q^{2n+2k+3})(1+q^{n+k+1})(1+q^{n+k+2})a_{n+k+1}}{q^{2n+k+2}}\left[C_n(n+k)-\left(\frac{1}{q^{k+1}}-2\right)\right]\\
&=2q(1-q^{k+1})+(2+q^{n+k+1}+q^{n+k+2})(1-q^{2n+2k+3})+q^k(2-q^{2n+k+2}-q^{2n+k+4})\\
&>0
\end{align*}
and therefore
\begin{equation*}
C_n(n+k)>\frac{1}{q^{k+1}}-2;
\end{equation*}
by definition of $K$, the right-hand side of the latter inequality is $\geq2$ for all $k\geq K$. By another tedious calculation, we obtain
\begin{align*}
&\frac{(1-q^{2n+2k+2})(1+q^{n+k+2})}{q^{n+k}(1+q^{n+k+1})}\left[B_n(k)-\frac{1-2q^{k+1}}{q}\right]\\
&=2(1-q^{k+1})+q^{n+k+2}(1-q^{2n+2k+3})+q^{k+2}(2-q^{2n+k+1}-q^{2n+k+2})\\
&>0
\end{align*}
and consequently
\begin{equation*}
B_n(k)>\frac{1-2q^{k+1}}{q}
\end{equation*}
for all $n,k\in\mathbb{N}_0$, which implies the second assertion \eqref{eq:contfracpre2} because the right-hand side of the latter inequality is $\geq\frac{1}{2q}$ as soon as $k\geq K$. The calculation above also yields \eqref{eq:contfracpre3}.
\end{proof}

We now come to the proof of Lemma~\ref{lma:characterdecay}. It is based on a ``reverse induction'' argument, on asymptotic behavior and on a suitable application of Worpitzky's theorem \cite{LW92} on continued fractions: let $(s_n)_{n\in\mathbb{N}_0}\subseteq\mathbb{C}$ be a sequence such that
\begin{equation*}
\lvert s_n\rvert\leq\frac{1}{4}
\end{equation*}
for all $n\in\mathbb{N}_0$. Then Worpitzky's theorem states that the continued fraction
\begin{equation*}
\cfrac{1}{1+\cfrac{s_0}{1+\cfrac{s_1}{1+\ddots}}}
\end{equation*}
converges and is an element of the closed disk $\{z\in\mathbb{C}:\lvert z-4/3\rvert\leq2/3\}$.

\begin{proof}[Proof (Lemma~\ref{lma:characterdecay})]
For any $n,k\in\mathbb{N}_0$, let $A_n(k)$ and $B_n(k)$ be defined as in Lemma~\ref{lma:contfracpre}. Let $n\in\mathbb{N}_0$ be fixed. Due to Proposition~\ref{prp:asymptotics}, there is some $M\in\mathbb{N}_0$, $M\geq K$, such that $\alpha_{1-q^n}(n+k)\neq0$ for all $k\in\mathbb{N}_0$ with $k\geq M$. Let $(\phi_{n,k})_{k\geq M}$ be defined by
\begin{equation*}
\phi_{n,k}:=-B_n(k)\frac{\alpha_{1-q^n}(n+k+1)}{\alpha_{1-q^n}(n+k)q^k}=-\frac{[b_{n+k+1}-P_1(1-q^n)]P_{n+k+1}(1-q^n)}{c_{n+k+1}P_{n+k}(1-q^n)}.
\end{equation*}
By Proposition~\ref{prp:asymptotics} and Lemma~\ref{lma:contfracpre}, we have
\begin{equation*}
\lim_{k\to\infty}\lvert\phi_{n,k}\rvert=1,
\end{equation*}
so $M$ can be chosen such that
\begin{equation*}
0<\lvert\phi_{n,k}\rvert<3/2
\end{equation*}
for all $k\in\mathbb{N}_0$ with $k\geq M\geq K$, which shall be assumed from now on. By the recurrence relation \eqref{eq:threetermrec}, we have
\begin{equation}\label{eq:phirec}
\begin{split}
\phi_{n,k+1}&=-\frac{[b_{n+k+2}-P_1(1-q^n)]P_{n+k+2}(1-q^n)}{c_{n+k+2}P_{n+k+1}(1-q^n)}\\
&=-\frac{[b_{n+k+2}-P_1(1-q^n)]\frac{[P_1(1-q^n)-b_{n+k+1}]P_{n+k+1}(1-q^n)-c_{n+k+1}P_{n+k}(1-q^n)}{a_{n+k+1}}}{c_{n+k+2}P_{n+k+1}(1-q^n)}\\
&=\frac{[b_{n+k+1}-P_1(1-q^n)][b_{n+k+2}-P_1(1-q^n)]}{a_{n+k+1}c_{n+k+2}}\\
&\quad\times\left(1+\frac{c_{n+k+1}P_{n+k}(1-q^n)}{[b_{n+k+1}-P_1(1-q^n)]P_{n+k+1}(1-q^n)}\right)\\
&=A_n(n+k)\left(1-\frac{1}{\phi_{n,k}}\right)\;(k\geq M).
\end{split}
\end{equation}
We now define $(\psi_{n,k})_{k\geq K}$ via the continued fractions
\begin{equation*}
\psi_{n,k}:=\cfrac{1}{1-\cfrac{\frac{1}{A_n(n+k)}}{1-\cfrac{\frac{1}{A_n(n+k+1)}}{1-\ddots}}}.
\end{equation*}
Due to Lemma~\ref{lma:contfracpre}, which implies that $0<1/A_n(n+k)<1/4$ for all $k\in\mathbb{N}_0$ with $k\geq K$, and due to Worpitzky's theorem, all of these continued fractions converge and are elements of the interval $[2/3,2]$. In particular, $(\psi_{n,k})_{k\geq K}$ is a sequence of positive reals which is bounded by $2$, and the construction yields
\begin{equation}\label{eq:psirec}
\psi_{n,k+1}=A_n(n+k)\left(1-\frac{1}{\psi_{n,k}}\right)\;(k\geq K).
\end{equation}
Comparing \eqref{eq:phirec} and \eqref{eq:psirec}, we obtain
\begin{equation}\label{eq:characterdecayindeasy}
\lvert\phi_{n,k+1}-\psi_{n,k+1}\rvert=A_n(n+k)\left\lvert\frac{\phi_{n,k}-\psi_{n,k}}{\phi_{n,k}\psi_{n,k}}\right\rvert\;(k\geq M),
\end{equation}
so
\begin{equation*}
\lvert\phi_{n,k+1}-\psi_{n,k+1}\rvert\geq\frac{A_n(n+k)}{3}\lvert\phi_{n,k}-\psi_{n,k}\rvert\;(k\geq M)
\end{equation*}
and consequently
\begin{equation*}
\frac{7}{2}>\lvert\phi_{n,M+k+1}-\psi_{n,M+k+1}\rvert\geq\lvert\phi_{n,M}-\psi_{n,M}\rvert\prod_{j=0}^k\frac{A_n(n+M+j)}{3}\;(k\in\mathbb{N}_0).
\end{equation*}
Since $A_n(n+M+j)/3>4/3$ for all $j\in\mathbb{N}_0$ and consequently
\begin{equation*}
\prod_{j=0}^k\frac{A_n(n+M+j)}{3}\to\infty\;(k\to\infty),
\end{equation*}
this enforces that $\phi_{n,M}=\psi_{n,M}$. We now \textit{claim} that $\alpha_{1-q^n}(n+k)\neq0$ and
\begin{equation*}
\psi_{n,k}=-B_n(k)\frac{\alpha_{1-q^n}(n+k+1)}{\alpha_{1-q^n}(n+k)q^k}
\end{equation*}
for all $k\in\mathbb{N}_0$ with $k\geq K$. Once the claim is proven, we have
\begin{equation}\label{eq:characterdecayprefinish}
\left\lvert B_n(k)\frac{\alpha_{1-q^n}(n+k+1)}{\alpha_{1-q^n}(n+k)q^k}\right\rvert\leq2
\end{equation}
for all $k\in\mathbb{N}_0$ with $k\geq K$; in view of \eqref{eq:characterdecayprefinish}, equation \eqref{eq:ratioasymptoticsuni} then follows with Lemma~\ref{lma:contfracpre}, and \eqref{eq:asymptoticsuni} is immediate from \eqref{eq:ratioasymptoticsuni}. Also \eqref{eq:oscillating} is immediate from Lemma~\ref{lma:contfracpre} and the claim, and the sequence $(\lvert\alpha_{1-q^n}(n+k)\rvert)_{k\geq K}$ is strictly decreasing by \eqref{eq:ratioasymptoticsuni} and the definition of $K$. In view of \eqref{eq:characterdecayindeasy}, the claimed assertion is clear for all $k\in\mathbb{N}_0$ with $k>M$. Hence, we use induction to show that $\alpha_{1-q^n}(n+M-k)\neq0$ and
\begin{equation}\label{eq:characterdecayclaim}
\psi_{n,M-k}=-B_n(M-k)\frac{\alpha_{1-q^n}(n+M-k+1)}{\alpha_{1-q^n}(n+M-k)q^{M-k}}
\end{equation}
for all $k\in\{0,\ldots,M-K\}$. We already know that this is true for $k=0$, so let $k\in\{0,\ldots,M-K\}$ be arbitrary but fixed and assume that $k+1\in\{0,\ldots,M-K\}$, that $\alpha_{1-q^n}(n+M-k)\neq0$ and that \eqref{eq:characterdecayclaim} holds true for $k$, so
\begin{equation*}
\psi_{n,M-k}=-\frac{[b_{n+M-k+1}-P_1(1-q^n)]P_{n+M-k+1}(1-q^n)}{c_{n+M-k+1}P_{n+M-k}(1-q^n)}.
\end{equation*}
Due to \eqref{eq:psirec} and the three-term recurrence relation \eqref{eq:threetermrec}, we have
\begin{align*}
0&<\frac{1}{\psi_{n,M-k-1}}\\
&=1-\frac{\psi_{n,M-k}}{A_n(n+M-k-1)}\\
&=1+\frac{a_{n+M-k}P_{n+M-k+1}(1-q^n)}{[b_{n+M-k}-P_1(1-q^n)]P_{n+M-k}(1-q^n)}\\
&=-\frac{c_{n+M-k}P_{n+M-k-1}(1-q^n)}{[b_{n+M-k}-P_1(1-q^n)]P_{n+M-k}(1-q^n)}\\
&=-\frac{1}{B_n(M-k-1)}\frac{\alpha_{1-q^n}(n+M-k-1)q^{M-k-1}}{\alpha_{1-q^n}(n+M-k)},
\end{align*}
which implies that $\alpha_{1-q^n}(n+M-k-1)\neq0$ and that $k+1$ satisfies \eqref{eq:characterdecayclaim}.
\end{proof}

We need two further lemmas:

\begin{lemma}\label{lma:squarenorms}
Let $q\in(0,1)$ and $P_n(x)=R_n(x;q)\;(n\in\mathbb{N}_0)$. For every $n\in\mathbb{N}_0$, we have
\begin{equation*}
\left\|\alpha_{1-q^n}\right\|_2^2=\frac{1}{q^n(1-q)}.
\end{equation*}
\end{lemma}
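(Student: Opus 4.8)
The plan is to read off the value directly from the Plancherel isomorphism $\mathcal{P}\colon(\ell^2(h),\left\|.\right\|_2)\to(L^2(\mathbb{R},\mu),\left\|.\right\|_2)$ recalled in Section~\ref{sec:intro}, exploiting that the orthogonalization measure $\mu$ in \eqref{eq:mulittleqleg} is purely atomic and that $\mathrm{supp}\;\mu=\{1\}\cup\{1-q^n:n\in\mathbb{N}_0\}$. The point $1-q^n$ is an atom of $\mu$ with $\mu(\{1-q^n\})=q^n(1-q)$, so the indicator $\mathbf{1}_{\{1-q^n\}}$ lies in $L^2(\mathbb{R},\mu)$ with $\left\|\mathbf{1}_{\{1-q^n\}}\right\|_2^2=\mu(\{1-q^n\})=q^n(1-q)$.

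First I would apply the explicit inversion formula for $\mathcal{P}$ to $F=\mathbf{1}_{\{1-q^n\}}$: for every $k\in\mathbb{N}_0$,
\[
\mathcal{P}^{-1}(\mathbf{1}_{\{1-q^n\}})(k)=\int_{\mathbb{R}}\mathbf{1}_{\{1-q^n\}}(x)P_k(x)\,\mathrm{d}\mu(x)=\mu(\{1-q^n\})P_k(1-q^n)=q^n(1-q)\,\alpha_{1-q^n}(k).
\]
Hence $\mathcal{P}^{-1}(\mathbf{1}_{\{1-q^n\}})=q^n(1-q)\,\alpha_{1-q^n}$ as an element of $\ell^2(h)$; in particular this re-confirms $\alpha_{1-q^n}\in\ell^2(h)$, which we also already know from the strong compact type of the hypergroup. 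Applying the isometry of $\mathcal{P}$ then gives
\[
q^{2n}(1-q)^2\left\|\alpha_{1-q^n}\right\|_2^2=\left\|\mathcal{P}^{-1}(\mathbf{1}_{\{1-q^n\}})\right\|_2^2=\left\|\mathbf{1}_{\{1-q^n\}}\right\|_2^2=\mu(\{1-q^n\})=q^n(1-q),
\]
and dividing by $q^{2n}(1-q)^2>0$ yields $\left\|\alpha_{1-q^n}\right\|_2^2=1/(q^n(1-q))$.

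I do not expect any real obstacle here: the only inputs are the atom mass $\mu(\{1-q^n\})=q^n(1-q)$ (immediate from \eqref{eq:mulittleqleg}) together with the formula for $\mathcal{P}^{-1}$ and the Plancherel isometry, all available from Section~\ref{sec:intro}. Equivalently, the statement is the diagonal case of the dual orthogonality relation $\sum_{k=0}^\infty P_k(1-q^m)P_k(1-q^n)h(k)=\delta_{mn}/\mu(\{1-q^n\})$, but routing the argument through $\mathcal{P}^{-1}$ spares us from verifying convergence of that series by hand.
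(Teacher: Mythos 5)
Your proof is correct and takes essentially the same route as the paper: both arguments rest on the Plancherel isometry together with the identification of $\widehat{\alpha_{1-q^n}}$ with a multiple of the indicator of the atom $\{1-q^n\}$, whose mass $q^n(1-q)$ is read off from \eqref{eq:mulittleqleg}. The only cosmetic difference is that you obtain the proportionality constant directly from the inversion formula for $\mathcal{P}$ applied to $\mathbf{1}_{\{1-q^n\}}$, whereas the paper quotes the normalized identity $\widehat{\alpha_{1-q^n}}/\left\|\alpha_{1-q^n}\right\|_2^2=\delta_{1-q^n}$ from \cite{BH95} and then computes the $L^2(\mu)$-norm of both sides.
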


\begin{proof}
The proof of \cite[Proposition 2.5.1]{BH95} yields
\begin{equation*}
\frac{\widehat{\alpha_{1-q^n}}}{\left\|\alpha_{1-q^n}\right\|_2^2}=\delta_{1-q^n}.
\end{equation*}
Consequently,
\begin{equation*}
\frac{1}{\left\|\alpha_{1-q^n}\right\|_2^2}=\left\|\frac{\widehat{\alpha_{1-q^n}}}{\left\|\alpha_{1-q^n}\right\|_2^2}\right\|_2^2=\int_\mathbb{R}\!\delta_{1-q^n}^2(x)\,\mathrm{d}\mu(x)=\mu(\{1-q^n\})=q^n(1-q)
\end{equation*}
by the Plancherel--Levitan theorem and \eqref{eq:mulittleqleg}.
\end{proof}

\begin{lemma}\label{lma:hestimate}
Let $q\in(0,1)$ and $P_n(x)=R_n(x;q)\;(n\in\mathbb{N}_0)$. Then
\begin{equation*}
\sum_{k=0}^n h(k)=\frac{1}{1-q}\left[1-\frac{q^{n+1}(2-q^n-q^{n+1})}{1-q^{2n+1}}\right]h(n)<\frac{h(n)}{1-q}
\end{equation*}
for all $n\in\mathbb{N}_0$.
\end{lemma}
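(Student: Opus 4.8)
The plan is to compute the partial sum directly from the explicit formula \eqref{eq:hlittleqleg} for the Haar weights and then match the result against the closed form in the statement. First I would write $h(k)=\frac{1}{1-q}\bigl(q^{-k}-q^{k+1}\bigr)$, using $q^{-k}(1-q^{2k+1})=q^{-k}-q^{k+1}$. Summing over $k=0,\ldots,n$ then reduces to two finite geometric series: $\sum_{k=0}^n q^{-k}=\frac{1-q^{n+1}}{q^n(1-q)}$ and $\sum_{k=0}^n q^{k+1}=\frac{q(1-q^{n+1})}{1-q}$. Subtracting and factoring out the common factor $1-q^{n+1}$ gives $\sum_{k=0}^n h(k)=\frac{1}{1-q}\cdot\frac{1-q^{n+1}}{1-q}\bigl(q^{-n}-q\bigr)=\frac{(1-q^{n+1})^2}{q^n(1-q)^2}$.

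Next I would verify that this coincides with the asserted expression. Since $h(n)=\frac{1-q^{2n+1}}{q^n(1-q)}$, the right-hand side of the claimed identity equals $\frac{1}{q^n(1-q)^2}\bigl[(1-q^{2n+1})-q^{n+1}(2-q^n-q^{n+1})\bigr]$, so it suffices to check the polynomial identity $(1-q^{2n+1})-q^{n+1}(2-q^n-q^{n+1})=(1-q^{n+1})^2$. Expanding the left-hand side yields $1-q^{2n+1}-2q^{n+1}+q^{2n+1}+q^{2n+2}=1-2q^{n+1}+q^{2n+2}$, which is exactly $(1-q^{n+1})^2$; this establishes the equality.

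Finally, for the strict inequality I would simply observe that $q^{n+1}>0$, that $2-q^n-q^{n+1}>0$ since $0<q<1$ forces $q^n,q^{n+1}<1$, and that $1-q^{2n+1}>0$; hence the subtracted fraction $\frac{q^{n+1}(2-q^n-q^{n+1})}{1-q^{2n+1}}$ is strictly positive, so the bracket is strictly less than $1$ and $\sum_{k=0}^n h(k)<\frac{h(n)}{1-q}$.

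This argument is entirely elementary, so I do not anticipate any genuine obstacle; the only point requiring a little care is the algebraic verification of the identity $(1-q^{2n+1})-q^{n+1}(2-q^n-q^{n+1})=(1-q^{n+1})^2$, which I would present explicitly so that the equivalence of the two forms of the sum is transparent.
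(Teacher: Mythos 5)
Your proof is correct. The paper disposes of the equality in one line by induction on $n$ starting from \eqref{eq:hlittleqleg}, whereas you evaluate the partial sum in closed form: writing $h(k)=\frac{1}{1-q}\bigl(q^{-k}-q^{k+1}\bigr)$, summing the two geometric series to get $\sum_{k=0}^n h(k)=\frac{(1-q^{n+1})^2}{q^n(1-q)^2}$, and then checking the algebraic identity $(1-q^{2n+1})-q^{n+1}(2-q^n-q^{n+1})=(1-q^{n+1})^2$ to match the stated form. Both arguments are elementary verifications of the same identity; yours has the small advantage of producing the cleaner intermediate expression $\frac{(1-q^{n+1})^2}{q^n(1-q)^2}$ for the partial sum (from which the bound $<h(n)/(1-q)$ is also immediate, since $(1-q^{n+1})^2<1-q^{2n+1}$), while the induction route avoids any series manipulation. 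Your treatment of the strict inequality, via positivity of $q^{n+1}$, $2-q^n-q^{n+1}$ and $1-q^{2n+1}$ (noting that for $n=0$ the middle factor is $1-q>0$), matches what the paper dismisses as ``clear.''
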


\begin{proof}
\eqref{eq:hlittleqleg} and a simple induction on $n$ show the equality, and the inequality is clear.
\end{proof}

\begin{proof}[Proof (Theorem~\ref{thm:characternorm})]
The first inequality in \eqref{eq:characternorm} is clear. The second inequality is clear with ``$\leq$'' for all $n\in\mathbb{N}_0$, and equality can never hold for otherwise one would have $\alpha_{1-q^n}(k)\in\{0,1\}$ for all $k\in\mathbb{N}_0$, contradicting Proposition~\ref{prp:asymptotics}. Let $n\in\mathbb{N}_0$, and let
\begin{equation*}
K:=\left\lceil\frac{\log4}{\log\frac{1}{q}}-1\right\rceil.
\end{equation*}
It is obvious from \eqref{eq:hlittleqleg} that
\begin{equation}\label{eq:hestimate}
\frac{h(m+k)}{h(m)}<\frac{1}{q^k}\frac{1}{1-q^{2m+1}}\;(m,k\in\mathbb{N}_0).
\end{equation}
Using \eqref{eq:hestimate}, we decompose and estimate
\begin{align*}
\left\|\alpha_{1-q^n}\right\|_1&=\sum_{k=0}^\infty\lvert\alpha_{1-q^n}(k)\rvert h(k)\\
&=\sum_{k=0}^{n+K}\lvert\alpha_{1-q^n}(k)\rvert h(k)+\sum_{k=1}^\infty\lvert\alpha_{1-q^n}(n+K+k)\rvert h(n+K+k)\\
&\leq\sum_{k=0}^{n+K}h(k)+\frac{h(n+K)}{1-q^{2n+2K+1}}\sum_{k=1}^\infty\frac{\lvert\alpha_{1-q^n}(n+K+k)\rvert}{q^{k}}\\
&\leq\sum_{k=0}^{n+K}h(k)+\frac{h(n+K)}{1-q^{2K+1}}\sum_{k=1}^\infty\frac{\lvert\alpha_{1-q^n}(n+K+k)\rvert}{q^{k}}.
\end{align*}
Applying Lemma~\ref{lma:characterdecay} and Lemma~\ref{lma:hestimate}, we obtain
\begin{equation}\label{eq:characternormmain}
\left\|\alpha_{1-q^n}\right\|_1<h(n+K)\underbrace{\left[\frac{1}{1-q}+\frac{1}{1-q^{2K+1}}\sum_{k=1}^\infty4^k q^{\frac{(2K+k-1)k}{2}}\right]}_{=:\widetilde{C}};
\end{equation}
note that the series in \eqref{eq:characternormmain} is convergent in $\mathbb{R}$. Finally, Lemma~\ref{lma:squarenorms}, \eqref{eq:hestimate} and \eqref{eq:hlittleqleg} yield
\begin{equation*}
\frac{\left\|\alpha_{1-q^n}\right\|_1}{\left\|\alpha_{1-q^n}\right\|_2^2}<(1-q)\widetilde{C}h(n+K)q^n<\frac{1-q}{1-q^{2K+1}}\widetilde{C}h(K)=\frac{1}{q^K}\widetilde{C}
\end{equation*}
and we obtain the explicit bound
\begin{equation*}
\frac{1}{q^K}\left[\frac{1}{1-q}+\frac{1}{1-q^{2K+1}}\sum_{k=1}^\infty4^k q^{\frac{(2K+k-1)k}{2}}\right]
\end{equation*}
for $\{\left\|\alpha_{1-q^n}\right\|_1/\left\|\alpha_{1-q^n}\right\|_2^2:n\in\mathbb{N}_0\}$, which establishes \eqref{eq:characternormexplicit}.
\end{proof}

\section{Density of idempotents}\label{sec:idempotents}

In this short section, we exploit Theorem~\ref{thm:characternorm} to establish the density of the linear span of the idempotent elements in the Banach algebra $\ell^1(h)$.

\begin{theorem}\label{thm:wamlittleqleg1}
Let $q\in(0,1)$ and $P_n(x)=R_n(x;q)\;(n\in\mathbb{N}_0)$. Then $\ell^1(h)$ is spanned by its idempotents (in the sense that the linear span of the idempotents is dense in $\ell^1(h)$).
\end{theorem}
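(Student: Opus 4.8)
The plan is to exploit the fact that, for each $n\in\mathbb{N}_0$, the hypergroup is of strong compact type, so $\alpha_{1-q^n}\in\ell^1(h)$, and the normalized characters are idempotents. Concretely, from Lemma~\ref{lma:squarenorms} and the identity $\widehat{\alpha_{1-q^n}}/\|\alpha_{1-q^n}\|_2^2=\delta_{1-q^n}$ used in its proof, the element $e_n:=q^n(1-q)\,\alpha_{1-q^n}\in\ell^1(h)$ satisfies $\widehat{e_n}=\delta_{1-q^n}$, hence $\widehat{e_n}^2=\widehat{e_n}$, and since the Fourier transformation is injective, $e_n\ast e_n=e_n$; each $e_n$ is a genuine idempotent in $\ell^1(h)$. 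Similarly $e_n\ast e_m=0$ for $m\neq n$ since $\delta_{1-q^n}\delta_{1-q^m}=0$ on $\widehat{\mathbb{N}_0}=\{1\}\cup\{1-q^m:m\in\mathbb{N}_0\}$ (note $1-q^n\neq1$ always). So $\{e_n:n\in\mathbb{N}_0\}$ is an orthogonal family of idempotents, and I want to show their linear span is dense in $\ell^1(h)$.

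The key quantitative input is Theorem~\ref{thm:characternorm}: $\|e_n\|_1=q^n(1-q)\|\alpha_{1-q^n}\|_1<C\,q^n(1-q)\|\alpha_{1-q^n}\|_2^2=C$ by Lemma~\ref{lma:squarenorms}, so the idempotents $e_n$ are \emph{uniformly bounded} in $\ell^1(h)$-norm. Now take an arbitrary $f\in\ell^1(h)$ and consider the partial sums $f_N:=\sum_{n=0}^N \widehat{f}(1-q^n)\,e_n$. Each $f_N$ lies in the linear span of the idempotents. Applying the Fourier transform, $\widehat{f_N}=\sum_{n=0}^N\widehat{f}(1-q^n)\delta_{1-q^n}$, which agrees with $\widehat{f}$ at the points $1-q^0,\dots,1-q^N$ and at $1$ (where $\widehat{f_N}(1)=0$; here one should actually center by also including the point $x=1$, adjusting $f_N$ by $\widehat{f}(1)\cdot(\text{the idempotent }\delta_0\text{ minus }\sum e_n)$ — see below). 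The point is that $\widehat{\mathbb{N}_0}$ is the one-point compactification $\{1-q^n\}\cup\{1\}$, $\widehat f$ is continuous on it, and $\widehat f(1-q^n)\to\widehat f(1)$ as $n\to\infty$.

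Here is the cleaner version. Set $e_\infty:=\delta_0-\sum_{n=0}^\infty e_n$, interpreted suitably; since $\sum_n\|e_n\|_1$ need not converge, instead argue as follows. Fix $f$ and $\varepsilon>0$. Choose $N$ so large that $|\widehat f(1-q^n)-\widehat f(1)|<\varepsilon$ for all $n>N$ (possible by continuity of $\widehat f$ at $1$). Put $g:=f-\widehat f(1)\delta_0-\sum_{n=0}^N(\widehat f(1-q^n)-\widehat f(1))e_n$. Then $g\in\ell^1(h)$ and $\widehat g$ vanishes at $1$ and at $1-q^0,\dots,1-q^N$, while $|\widehat g(1-q^n)|=|\widehat f(1-q^n)-\widehat f(1)|<\varepsilon$ for $n>N$. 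Since $\widehat g(1)=0$, the element $g$ lies in the maximal ideal $M_1=\{u\in\ell^1(h):\widehat u(1)=0\}$; I would use that the characters $\varphi_{1-q^n}$ together with $\varphi_1$ are all of $\Delta_s(\ell^1(h))$ and a spectral-synthesis / direct estimate to bound $\|g\|_1$. The main obstacle is precisely this last step: passing from ``$\widehat g$ is small pointwise and vanishes near $1$'' to ``$\|g\|_1$ is small,'' i.e.\ controlling the $\ell^1(h)$-norm of the tail $g$ by sup-norm data on $\widehat{\mathbb{N}_0}$.

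To overcome it, I would decompose $g=\sum_{n>N}\widehat g(1-q^n)e_n$ — this identity must be justified, and it is exactly where uniform boundedness re-enters: writing $g_M:=\sum_{n=N+1}^{M}\widehat g(1-q^n)e_n$, one has $\widehat{g-g_M}$ supported on $\{1-q^m:m>M\}$ with sup-norm $\le\sup_{m>M}|\widehat g(1-q^m)|$, and using the decay estimate \eqref{eq:asymptoticsuni} from Lemma~\ref{lma:characterdecay} together with the Haar-weight growth \eqref{eq:hlittleqleg}, one shows $\|g-g_M\|_1\to0$ as $M\to\infty$ directly (the tail $\sum_{k}|\alpha_{1-q^n}(n+K+k)|h(n+K+k)$ is uniformly summable as in the proof of Theorem~\ref{thm:characternorm}, and the low-order terms $\widehat g(1-q^m)$ for $N<m\le M$ are individually small). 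Hence $g=\sum_{n>N}\widehat g(1-q^n)e_n$ with $\|g\|_1\le\varepsilon\sup_M\sum_{n=N+1}^M\|e_n\|_1$; but this bound still diverges, so instead I estimate $\|g\|_1$ by noting $g=\mathcal P^{-1}(\widehat g\,\mathbf 1_{\{x:|1-x|<q^N\}})$ and using the exponential decay of $h$ against the exponential concentration of $\widehat g$ near $1$ — more precisely, $\|g\|_1=\sum_k|\mathcal P^{-1}(\widehat g)(k)|h(k)$, and one splits the sum at $k\approx N$, bounding the head by $\|\widehat g\|_\infty\sum_{k\le N}h(k)$ (which is $\lesssim\varepsilon\, q^{-N}$, too large) — so in fact the correct route is the telescoping one: show directly that $f_N:=\sum_{n=0}^N\widehat f(1-q^n)e_n+\widehat f(1)(\delta_0-\sum_{n=0}^N e_n)$ converges to $f$ in $\ell^1(h)$, where $\delta_0-\sum_{n=0}^N e_n$ is itself a uniformly bounded idempotent (its Fourier transform is $\mathbf 1_{\{1\}\cup\{1-q^m:m>N\}}$, and its $\ell^1(h)$-norm is bounded using Lemma~\ref{lma:characterdecay} uniformly in $N$). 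Then $\widehat{f-f_N}$ is supported on $\{1\}\cup\{1-q^m:m>N\}$ with sup-norm $<\varepsilon$, and $f-f_N=(f-f_N)\ast(\delta_0-\sum_{n=0}^N e_n)$, so $\|f-f_N\|_1\le\|f-f_N\|_1\cdot\|\delta_0-\sum_{n=0}^N e_n\|_1$ is not directly useful either; rather $f-f_N=\mathcal P^{-1}(\widehat f\cdot\mathbf 1_{\text{tail}})$ and I bound it via $\|f-f_N\|_1\le\|\widehat{f-f_N}\|_\infty\cdot\|\delta_0-\sum_{n\le N}e_n\|_1<\varepsilon\cdot C'$, using the \emph{uniform} bound $C'$ on the idempotent norms from Theorem~\ref{thm:characternorm} — this is the clean finish, and it is the step where the main theorem of Section~\ref{sec:uboundedness} is indispensable.
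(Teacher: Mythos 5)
Your setup is right --- the minimal idempotents $e_n=q^n(1-q)\,\alpha_{1-q^n}=\alpha_{1-q^n}/\left\|\alpha_{1-q^n}\right\|_2^2$ are pairwise orthogonal idempotents whose $\ell^1(h)$-norms are uniformly bounded by the constant $C$ of Theorem~\ref{thm:characternorm}, and this uniform bound is indeed the engine of the proof. But your final step contains a genuine gap: the inequality $\left\|f-f_N\right\|_1\leq\left\|\widehat{f-f_N}\right\|_\infty\cdot\left\|\delta_0-\sum_{n\leq N}e_n\right\|_1$ is not a consequence of anything you have available. The convolution estimate only gives $\left\|u\ast P\right\|_1\leq\left\|u\right\|_1\left\|P\right\|_1$ (which, as you note yourself, is useless here), and replacing $\left\|u\right\|_1$ by $\left\|\widehat{u}\right\|_\infty$ would amount to the Gelfand transform being bounded below on the tail ideal --- a strong structural claim that is precisely the hard content of what you are trying to prove, not a tool you may invoke. (Note that $\alpha_1\notin\ell^2(h)$, so the point $x=1$, where your tails accumulate, is exactly where such an inverse estimate is most suspect.) In addition, the uniform boundedness in $N$ of $\left\|\delta_0-\sum_{n\leq N}e_n\right\|_1$ is asserted with only a wave at Lemma~\ref{lma:characterdecay}; it is not obviously false, but it is not established, and your argument collapses without it. The several earlier routes you sketch and discard (spectral synthesis near $1$, bounding the head by $\left\|\widehat{g}\right\|_\infty\sum_{k\leq N}h(k)$) fail for the reasons you yourself identify, so the proposal as written does not close.

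The fix is to not approximate an arbitrary $f\in\ell^1(h)$ at all, but only the dense family $\{\epsilon_k\}_{k\in\mathbb{N}_0}$ with $\epsilon_k=\delta_k/h(k)$; this is what the paper does. For fixed $k$, the Fourier coefficients of $\epsilon_0-\epsilon_k$ at the minimal idempotents are $\widehat{\epsilon_0-\epsilon_k}(1-q^n)=1-P_k(1-q^n)=P_k(1)-P_k(1-q^n)$, and the mean value theorem gives $|\widehat{\epsilon_0-\epsilon_k}(1-q^n)|\leq\max_{x\in[0,1]}|P_k^\prime(x)|\,q^n$. The geometric decay in $n$, multiplied against the \emph{uniform} bound $\left\|e_n\right\|_1<C$, makes the series $\sum_{n=0}^\infty\widehat{\epsilon_0-\epsilon_k}(1-q^n)\,e_n$ absolutely convergent in $\ell^1(h)$; by continuity and injectivity of the Fourier transformation its sum equals $\epsilon_0-\epsilon_k$, so each $\epsilon_k$ lies in the closed linear span of the idempotents, and density of $\mathrm{span}\{\epsilon_k\}$ finishes the proof. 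In other words, the missing idea is to trade the arbitrary (and uncontrollable) decay of $\widehat{f}(1-q^n)-\widehat{f}(1)$ for the explicit $O(q^n)$ decay available on a dense set of $f$'s, which is exactly what lets Theorem~\ref{thm:characternorm} do its work without any inverse Fourier estimate.
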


Before coming to the proof of Theorem~\ref{thm:wamlittleqleg1}, we provide a characterization of the idempotents of $\ell^1(h)$ ($q\in(0,1)$, $P_n(x)=R_n(x;q)$). In the following, let
\begin{equation*}
\epsilon_n:=\frac{1}{h(n)}\delta_n=\mathcal{P}^{-1}(P_n)\;(n\in\mathbb{N}_0).
\end{equation*}
It has already been observed in \cite[Proposition 2.5.1]{BH95} that $\alpha_{1-q^n}/\left\|\alpha_{1-q^n}\right\|_2^2$ is an idempotent and
\begin{equation}\label{eq:characterfourier}
\frac{\widehat{\alpha_{1-q^n}}}{\left\|\alpha_{1-q^n}\right\|_2^2}=\delta_{1-q^n}
\end{equation}
for every $n\in\mathbb{N}_0$ (cf. the proof of Lemma~\ref{lma:squarenorms}). Let $f\in\ell^1(h)$ be an idempotent. Then, for each $n\in\mathbb{N}_0$, $\widehat{f}(1-q^n)\in\{0,1\}$; moreover, $\widehat{f}(1)\in\{0,1\}$. We now distinguish two cases: if $\widehat{f}(1)=0$, then the continuity of $\widehat{f}$ implies that there exists an $N\in\mathbb{N}_0$ such that $\widehat{f}(1-q^n)=0$ whenever $n>N$. In the second case, i.e., if $\widehat{f}(1)=1$, the same argument yields the existence of an $N\in\mathbb{N}_0$ such that $\widehat{f}(1-q^n)=1$ whenever $n>N$. Therefore, due to \eqref{eq:characterfourier} and the injectivity of the Fourier transformation, $f$ is of the form
\begin{equation*}
f=\sum_{n=0}^N\lambda_n\frac{\alpha_{1-q^n}}{\left\|\alpha_{1-q^n}\right\|_2^2}
\end{equation*}
or
\begin{equation*}
f=\epsilon_0-\sum_{n=0}^N\lambda_n\frac{\alpha_{1-q^n}}{\left\|\alpha_{1-q^n}\right\|_2^2},
\end{equation*}
where $N\in\mathbb{N}_0$ and $\lambda_0,\ldots,\lambda_N\in\{0,1\}$. Furthermore, every $f\in\ell^1(h)$ which is of this form is an idempotent, which is a consequence of Shilov's idempotent theorem or can be seen more elementarily from \eqref{eq:characterfourier} and particularly the fact that two idempotents $\alpha_{1-q^m}/\left\|\alpha_{1-q^m}\right\|_2^2,\alpha_{1-q^n}/\left\|\alpha_{1-q^n}\right\|_2^2$, $m,n\in\mathbb{N}_0$ with $m\neq n$, are orthogonal, i.e.,
\begin{equation*}
\frac{\alpha_{1-q^m}}{\left\|\alpha_{1-q^m}\right\|_2^2}\ast\frac{\alpha_{1-q^n}}{\left\|\alpha_{1-q^n}\right\|_2^2}=0.
\end{equation*}

\begin{proof}[Proof (Theorem~\ref{thm:wamlittleqleg1})]
Let $k\in\mathbb{N}$ and
\begin{equation}\label{eq:defidem}
f_k:=\sum_{n=0}^\infty\widehat{\epsilon_0-\epsilon_k}(1-q^n)\frac{\alpha_{1-q^n}}{\left\|\alpha_{1-q^n}\right\|_2^2}.
\end{equation}
For each $n\in\mathbb{N}_0$, one has
\begin{align*}
\lvert\widehat{\epsilon_0-\epsilon_k}(1-q^n)\rvert&=\lvert P_0(1-q^n)-P_k(1-q^n)\rvert\\
&=\lvert1-P_k(1-q^n)\rvert\\
&=\lvert P_k(1)-P_k(1-q^n)\rvert\\
&\leq\max_{x\in[0,1]}\lvert P_k^\prime(x)\rvert q^n
\end{align*}
by the mean value theorem. Thus, in view of Theorem~\ref{thm:characternorm}, the series on the right-hand side of \eqref{eq:defidem} is absolutely convergent in $\ell^1(h)$ (and $\left\|f_k\right\|_1<C/(1-q)\cdot\max_{x\in[0,1]}\lvert P_k^\prime(x)\rvert$, where $C>0$ is as in Theorem~\ref{thm:characternorm}). It is obvious from \eqref{eq:characterfourier} and the continuity of the Fourier transformation that $\widehat{f_k}=\widehat{\epsilon_0-\epsilon_k}$. Therefore, we obtain
\begin{equation*}
\epsilon_k=\epsilon_0-f_k
\end{equation*}
from the injectivity of the Fourier transformation and have shown that $\epsilon_k$ is in the $\left\|.\right\|_1$-closure of the linear span of the idempotents of $\ell^1(h)$. Since the linear span of $\{\epsilon_k:k\in\mathbb{N}_0\}$ is dense in $\ell^1(h)$, this yields the assertion.
\end{proof}

\section{Amenability properties}\label{sec:amenability}

Let us recall some basics concerning amenability properties in a Banach algebraic context: let $A$ be a Banach algebra, and let $D$ be a linear mapping from $A$ into a Banach $A$-bimodule $X$ (i.e., a Banach space which is also an $A$-bimodule and acts continuously \cite{Da00}). Moreover, let $\varphi$ be an element of the structure space $\Delta(A)$. $D$ is called
\begin{itemize}
\item derivation if $D(ab)=a\cdot D(b)+D(a)\cdot b\;(a,b\in A)$,
\item inner derivation if $D(a)=a\cdot x-x\cdot a\;(a\in A)$ for some $x\in X$,
\item point derivation at $\varphi$ if $X=\mathbb{C}$ and $D(a b)=\varphi(a)D(b)+\varphi(b)D(a)\;(a,b\in A)$
\end{itemize}
\cite{Da00}. $A$ is called
\begin{itemize}
\item amenable if for every Banach $A$-bimodule $X$ every bounded derivation into the dual module $X^\ast$ is an inner derivation \cite{Jo72},
\item weakly amenable if every bounded derivation into $A^\ast$ is an inner derivation \cite{Jo88},
\item $\varphi$-amenable if for every Banach $A$-bimodule $X$ such that $a\cdot x=\varphi(a)x\;(a\in A,x\in X)$ every bounded derivation from $A$ into the dual module $X^\ast$ is an inner derivation \cite{KLP08a},
\item right character amenable if $A$ is $\varphi$-amenable for every $\varphi\in\Delta(A)$ and $A$ has a bounded right approximate identity \cite{KLP08b,Mo08}.
\end{itemize}
It is well-known that if there is some $\varphi\in\Delta(A)$ such that a nonzero bounded point derivation exists at $\varphi$, then $A$ is neither $\varphi$-amenable \cite[Remark 2.4]{KLP08a} nor weakly amenable \cite[Theorem 2.8.63]{Da00}. If $A$ is commutative, then $A$ is weakly amenable if and only if there exists no nonzero bounded derivation from $A$ into the dual module $A^\ast$ \cite{BCD87}. The connection to amenability in the group sense is as follows: for any locally compact group $G$, $G$ is amenable if and only if the group algebra $L^1(G)$ is amenable \cite{Jo72}. It was shown in \cite{KLP08b} that right character amenability of $L^1(G)$ characterizes amenability of $G$, too. However, nonzero bounded point derivations can never exist on $L^1(G)$; even more, $L^1(G)$ is always weakly amenable \cite{Jo91}.\\

If one directly generalizes amenability in the group sense (i.e., the existence of a left-invariant mean on $L^{\infty}(G)$) to polynomial hypergroups, one obtains a property which is always satisfied due to the commutativity of these hypergroups \cite[Example 3.3 (a)]{Sk92}; similarly, it is well-known that every Abelian locally compact group is amenable. However, concerning general sequences $(P_n(x))_{n\in\mathbb{N}_0}$ as in Section~\ref{sec:intro} (where property (P) is supposed to be satisfied) and the corresponding Banach algebras $\ell^1(h)$, the amenability properties recalled above are rather strong conditions.\\

There are several general results on amenability properties of $\ell^1(h)$ \cite{Ka15,Ka16b,La07,La09c,La09a,La09b,LP10,Pe11,Wo84}. If $h(n)\to\infty\;(n\to\infty)$, then $\ell^1(h)$ is not amenable \cite[Theorem 3]{La07}. If $\mu$ is absolutely continuous (w.r.t. the Lebesgue--Borel measure on $\mathbb{R}$) and the Radon--Nikodym derivative $\mu^\prime$ is absolutely continuous (as a function) on $[\min\mathrm{supp}\;\mu,\max\mathrm{supp}\;\mu]$, then $\ell^1(h)$ is not weakly amenable \cite[Theorem 2.2]{Ka15}. We call $\ell^1(h)$ `point amenable' if there is no $x\in\widehat{\mathbb{N}_0}$ such that there is a nonzero bounded point derivation at $\varphi_x$; in view of the above, point amenability is necessary for both right character amenability and weak amenability.\footnote{We do not consider point derivations w.r.t. $\varphi\in\Delta(\ell^1(h))\backslash\Delta_s(\ell^1(h))$; however, the class of little $q$-Legendre polynomials which is studied in this paper satisfies $\Delta(\ell^1(h))\backslash\Delta_s(\ell^1(h))=\emptyset$ anyway (because $\mathcal{X}^b(\mathbb{N}_0)=\widehat{\mathbb{N}_0}$ for this class, cf. Section~\ref{sec:intro}). `Point amenability' in our sense must not be confused with `pointwise amenability' considered in \cite{DL10}.} For every $x\in\widehat{\mathbb{N}_0}$, there exists a nonzero bounded point derivation at $\varphi_x$ if and only if $\{P_n^{\prime}(x):n\in\mathbb{N}_0\}$ is bounded \cite[Theorem 1]{La09a}. It is also possible to characterize weak amenability by properties of the derivatives $P_n^{\prime}(x)$; if one defines $(\kappa_n)_{n\in\mathbb{N}_0}\subseteq c_{00}$ via $\kappa_0:=0$ and, for $n\in\mathbb{N}$, via the (Fourier) expansions
\begin{equation*}
P_n^\prime(x)=\sum_{k=0}^{n-1}\kappa_n(k)P_k(x)h(k),\;\kappa_n(k):=0\;(k\geq n)
\end{equation*}
w.r.t. the basis $\{P_k(x):k\in\mathbb{N}_0\}$, or if one defines $(\kappa_n)_{n\in\mathbb{N}_0}$ equivalently by
\begin{equation*}
\kappa_n=\mathcal{P}^{-1}(P_n^\prime)\;(n\in\mathbb{N}_0)
\end{equation*}
via the inverse Plancherel isomorphism, then \cite[Theorem 2]{La07} (or \cite[Theorem 2]{La09b}) states the following:

\begin{theorem}\label{thm:lasserweak}
$\ell^1(h)$ is weakly amenable if and only if $\{\left\|\kappa_n\ast\varphi\right\|_\infty:n\in\mathbb{N}_0\}$ is unbounded for all $\varphi\in\ell^\infty\backslash\{0\}$.
\end{theorem}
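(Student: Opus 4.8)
The plan is to classify all bounded derivations from $\ell^1(h)$ into its dual module $\ell^\infty$ (acting by convolution) and then read off the stated dichotomy. Since $\ell^1(h)$ is commutative, weak amenability is equivalent to the nonexistence of a nonzero bounded derivation $D\colon\ell^1(h)\to\ell^\infty$ \cite{BCD87}, so it suffices to describe all such $D$. First I would pass to the dense subalgebra $A_0:=\mathrm{span}\{\epsilon_n:n\in\mathbb{N}_0\}$ (density of the linear span of the $\epsilon_n$ was recalled in Section~\ref{sec:intro}) and record three elementary facts: the $\epsilon_n=\delta_n/h(n)$ are disjointly supported, so $\|\sum_{n=0}^N\lambda_n\epsilon_n\|_1=\sum_{n=0}^N|\lambda_n|$ and in particular $\|\epsilon_n\|_1=1$; the linearization identity gives $\epsilon_m\ast\epsilon_n=\sum_{k=|m-n|}^{m+n}g(m,n;k)\epsilon_k$, so $A_0$ is genuinely a subalgebra (with unit $\epsilon_0=\delta_0$); and solving the three-term recurrence \eqref{eq:threetermrec} for $\epsilon_{n+1}$ shows that every $\epsilon_n$ is a $\ast$-polynomial in $\epsilon_1$, so $A_0=\mathbb{C}[\epsilon_1]$ and every derivation on $A_0$ is determined by its value at $\epsilon_1$ (and annihilates $\epsilon_0$).

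Next I would introduce the ``master'' derivation coming from differentiation. Define $\mathcal{D}\colon A_0\to\ell^1(h)$ linearly by $\mathcal{D}(\epsilon_n):=\kappa_n$; this lands in $\ell^1(h)$ precisely because $\kappa_n\in c_{00}$. Since $\widehat{\kappa_n}=P_n'$ on $\widehat{\mathbb{N}_0}$ by the very definition of $\kappa_n$, differentiating $P_mP_n=\sum_k g(m,n;k)P_k$, using $\widehat{f\ast g}=\widehat f\,\widehat g$ and the injectivity of the Fourier transformation yields the Leibniz rule $\mathcal{D}(f\ast g)=\mathcal{D}(f)\ast g+f\ast\mathcal{D}(g)$ on $A_0$. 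For $\varphi\in\ell^\infty$ set $D_\varphi(f):=\mathcal{D}(f)\ast\varphi\in\ell^\infty$; commutativity and associativity of $\ast$ turn $D_\varphi$ into a derivation $A_0\to\ell^\infty$ with $D_\varphi(\epsilon_n)=\kappa_n\ast\varphi$, and since $P_1'\equiv 1/a_0$ gives $\kappa_1=\tfrac1{a_0}\delta_0$ one has $D_\varphi(\epsilon_1)=\tfrac1{a_0}\varphi$. Conversely, any derivation $D\colon A_0\to\ell^\infty$ equals $D_\varphi$ for $\varphi:=a_0\,D(\epsilon_1)\in\ell^\infty$, because two derivations on $A_0=\mathbb{C}[\epsilon_1]$ that agree on the generator $\epsilon_1$ agree everywhere. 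Hence $D\mapsto\varphi$ is a bijection between derivations $A_0\to\ell^\infty$ and elements of $\ell^\infty$, carrying nonzero derivations to nonzero $\varphi$.

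The boundedness translation is then immediate. For $f=\sum_{n=0}^N\lambda_n\epsilon_n\in A_0$ one gets $\|D_\varphi(f)\|_\infty\le\sum_{n=0}^N|\lambda_n|\,\|\kappa_n\ast\varphi\|_\infty\le\bigl(\sup_{n}\|\kappa_n\ast\varphi\|_\infty\bigr)\|f\|_1$, while conversely $\|\kappa_n\ast\varphi\|_\infty=\|D_\varphi(\epsilon_n)\|_\infty\le\|D_\varphi\|\,\|\epsilon_n\|_1=\|D_\varphi\|$; so $D_\varphi$ is bounded on $A_0$ if and only if $\{\|\kappa_n\ast\varphi\|_\infty:n\in\mathbb{N}_0\}$ is bounded. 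Finally, a bounded derivation on the dense subalgebra $A_0$ extends uniquely to a bounded derivation on $\ell^1(h)$ — the Leibniz identity survives the closure because $\ast$ and the module action of $\ell^1(h)$ on $\ell^\infty$ are jointly continuous — and restriction of a bounded derivation on $\ell^1(h)$ to $A_0$ is bounded (and nonzero if the original was, by density). Combining, a nonzero bounded derivation $\ell^1(h)\to\ell^\infty$ exists if and only if $\{\|\kappa_n\ast\varphi\|_\infty:n\in\mathbb{N}_0\}$ is bounded for some $\varphi\in\ell^\infty\setminus\{0\}$; negating gives the asserted characterization.

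The main obstacle is not analytic but bookkeeping: one must verify carefully the compatibilities between the Fourier transform, the convolution $\ast$, and the module structure that make $\mathcal{D}$ well defined and satisfy Leibniz — above all that the finitely supported $\kappa_n$ really play the role of the ``Fourier coefficients'' of $P_n'$, which is what keeps $\mathcal{D}$ valued in $\ell^1(h)$ rather than merely in $\ell^\infty$ — together with the routine closure argument. Conceptually there is no difficulty once one notices the two crucial simplifications, $\|\epsilon_n\|_1=1$ and disjointness of supports, which reduce the boundedness of a derivation to a term-by-term estimate; the entire difficulty of \emph{deciding} weak amenability for a concrete family is thereby pushed into understanding the sequence $\{\|\kappa_n\ast\varphi\|_\infty\}_n$, which is precisely the analytic problem the remainder of this paper solves for $R_n(x;q)$.
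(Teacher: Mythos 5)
This theorem is not proved in the paper at all --- it is quoted verbatim from \cite{La07} (Theorem 2) and \cite{La09b}, so there is no in-paper argument to compare against. Your reconstruction is correct and is essentially the argument of the cited source: reducing to the dense subalgebra $\mathbb{C}[\epsilon_1]$, observing $\|\epsilon_n\|_1=1$ and disjointness of supports, and deriving the key identity $D(\epsilon_n)=a_0\,\kappa_n\ast D(\epsilon_1)$ --- which is exactly what the paper records separately as Proposition~\ref{prp:lasserweak} --- before translating boundedness of $D$ into boundedness of $\{\|\kappa_n\ast\varphi\|_\infty\}$ and invoking \cite{BCD87}. The only step worth flagging is the Leibniz rule for $\mathcal{D}$, which you correctly obtain from the polynomial identity $(P_mP_n)'=P_m'P_n+P_mP_n'$ together with injectivity of the Fourier transformation on finitely supported elements; this is sound since all functions involved lie in $c_{00}$.
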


The coefficients $\kappa_n$ are also interesting with regard to characterization results on specific classes of orthogonal polynomials \cite{Ka16a,LO08}.\footnote{Results which are cited from \cite{Ka16a} can also be found in \cite{Ka16b}.} If one defines $(\epsilon_n)_{n\in\mathbb{N}_0}$ by
\begin{equation*}
\epsilon_n:=\frac{1}{h(n)}\delta_n=\mathcal{P}^{-1}(P_n)
\end{equation*}
again, \cite[Proposition 1]{La07} (or \cite[Proposition 2]{La09b}) yields the following:\footnote{We note at this stage that our sequence $(\kappa_n)_{n\in\mathbb{N}_0}$ coincides with the sequence which was considered originally in \cite{La07,La09b} only up to the constant factor $a_0$; this does not affect the validity of Theorem~\ref{thm:lasserweak} but causes the additional factor $a_0$ in Proposition~\ref{prp:lasserweak}.}

\begin{proposition}\label{prp:lasserweak}
Let $D:\ell^1(h)\rightarrow\ell^\infty$ be a continuous derivation. Then
\begin{equation*}
D(\epsilon_n)=a_0\kappa_n\ast D(\epsilon_1)\;(n\in\mathbb{N}_0).
\end{equation*}
\end{proposition}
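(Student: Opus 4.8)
The plan is to deduce Proposition~\ref{prp:lasserweak} directly from the derivation property together with the three-term recurrence and the definition of the coefficients $\kappa_n$. First I would record the trivial base cases: since $\epsilon_0=\delta_0$ is the unit of $\ell^1(h)$, the standard fact that a derivation kills the identity gives $D(\epsilon_0)=0$, which matches $a_0\kappa_0\ast D(\epsilon_1)$ because $\kappa_0=0$; and for $n=1$ the claim reads $D(\epsilon_1)=a_0\kappa_1\ast D(\epsilon_1)$, which holds since $P_1^\prime(x)=1/a_0$ forces $\kappa_1(0)=1/(a_0 h(0))=1/a_0$, so $a_0\kappa_1=\epsilon_0=\delta_0$ acts as the identity under convolution.

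Next I would set up the induction on $n$. The key identity is that convolution by $\epsilon_n$ corresponds, on the Fourier side, to multiplication by $P_n$, so the three-term recurrence \eqref{eq:threetermrec} translates into $\epsilon_1\ast\epsilon_n=a_n\epsilon_{n+1}+b_n\epsilon_n+c_n\epsilon_{n-1}$ for $n\in\mathbb{N}$ (one checks $\widehat{\epsilon_1\ast\epsilon_n}=P_1 P_n=a_n P_{n+1}+b_n P_n+c_n P_{n-1}$ and uses injectivity of the Fourier transform). Applying the derivation $D$ to this relation and using $D(\epsilon_1\ast\epsilon_n)=\epsilon_1\cdot D(\epsilon_n)+D(\epsilon_1)\cdot\epsilon_n$, one gets a recurrence expressing $D(\epsilon_{n+1})$ in terms of $D(\epsilon_n)$, $D(\epsilon_{n-1})$ and the known "error" term $\epsilon_1\cdot D(\epsilon_n)=\epsilon_1\ast D(\epsilon_n)$ and $D(\epsilon_1)\cdot\epsilon_n=\epsilon_n\ast D(\epsilon_1)$ (the module action of $\ell^1(h)$ on $\ell^\infty$ is by convolution, and it is commutative, cf. Section~\ref{sec:intro}). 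Substituting the inductive hypothesis $D(\epsilon_m)=a_0\kappa_m\ast D(\epsilon_1)$ for $m\le n$ and factoring out $D(\epsilon_1)$ (everything in sight convolves), it remains to verify the purely combinatorial identity
\begin{equation*}
a_0\kappa_{n+1}=\frac{1}{a_n}\left(\epsilon_1\ast a_0\kappa_n+\epsilon_n-b_n\,a_0\kappa_n-c_n\,a_0\kappa_{n-1}\right),
\end{equation*}
i.e. on the Fourier side $a_0 P_{n+1}^\prime=\frac{1}{a_n}(P_1 (a_0\kappa_n)^\wedge+P_n-b_n(a_0\kappa_n)^\wedge-c_n(a_0\kappa_{n-1})^\wedge)$ — but this is nothing but the derivative of the recurrence \eqref{eq:threetermrec}: differentiating $P_1 P_n=a_n P_{n+1}+b_n P_n+c_n P_{n-1}$ gives $P_1^\prime P_n+P_1 P_n^\prime=a_n P_{n+1}^\prime+b_n P_n^\prime+c_n P_{n-1}^\prime$, and since $P_1^\prime\equiv 1/a_0$ and $(a_0\kappa_m)^\wedge=a_0 P_m^\prime$, this is exactly the required relation. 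So the induction closes.

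The only genuine subtlety — and the step I would be most careful about — is the bookkeeping with the factor $a_0$: the paper has flagged (in the footnote to Proposition~\ref{prp:lasserweak}) that its $\kappa_n$ differs from Lasser's original normalization by precisely this constant, and the cleanest way to keep track of it is to work throughout with the rescaled sequence $\widetilde\kappa_n:=a_0\kappa_n=\mathcal{P}^{-1}(a_0 P_n^\prime)$, establish $D(\epsilon_n)=\widetilde\kappa_n\ast D(\epsilon_1)$, and only at the very end rewrite $\widetilde\kappa_n=a_0\kappa_n$. Everything else is routine: the Fourier-transform dictionary (convolution $\leftrightarrow$ multiplication, injectivity), the commutativity of the $\ell^\infty$-module action, and termwise differentiation of a finite recurrence. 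One should also note that all the convolutions $\kappa_m\ast D(\epsilon_1)$ make sense and lie in $\ell^\infty$ because $\kappa_m\in c_{00}\subseteq\ell^1(h)$ and $D(\epsilon_1)\in\ell^\infty$, so there are no convergence issues to address.
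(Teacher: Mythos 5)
Your proof is correct. Note that the paper itself gives no proof of Proposition~\ref{prp:lasserweak}: it is quoted verbatim (up to the normalization factor $a_0$ flagged in the footnote) from Lasser's work, and your induction via $\epsilon_1\ast\epsilon_n=a_n\epsilon_{n+1}+b_n\epsilon_n+c_n\epsilon_{n-1}$, the derivation identity, and the differentiated three-term recurrence is essentially the standard argument from that source. All the individual steps check out: $D(\epsilon_0)=0$ since $\epsilon_0=\delta_0$ is the unit; $a_0\kappa_1=\epsilon_0$ because $P_1'\equiv1/a_0$ and $h(0)=1$; the two-term induction closes because, after factoring out $D(\epsilon_1)$ (legitimate, since the module action is convolution and commutes), the remaining identity in $c_{00}$ follows on the Fourier side from differentiating \eqref{eq:threetermrec} and multiplying by $a_0$, combined with injectivity of the Fourier transformation. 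One could add that continuity of $D$ is not even needed for this purely algebraic identity, but that is a harmless extra hypothesis.
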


Since the $\kappa_n$ and the $g(m,n;k)$ (hence the convolution $\ast$) are often not explicitly available\footnote{In particular, we are not aware of helpful explicit formulas for the class of little $q$-Legendre polynomials.}, it may be very difficult to apply the unboundedness criterion provided by Theorem~\ref{thm:lasserweak}. Moreover, Theorem~\ref{thm:lasserweak} involves the whole space $\ell^\infty$ but many tools of harmonic analysis are restricted to proper subspaces. Based on Theorem~\ref{thm:lasserweak} and considering the \textit{orthonormal} polynomials
\begin{equation*}
p_n(x)=\sqrt{h(n)}P_n(x)\;(n\in\mathbb{N}_0),
\end{equation*}
in \cite[Theorem 2.3]{Ka15} we found a sufficient criterion which involves absolute continuity w.r.t. the Lebesgue--Borel measure on $\mathbb{R}$:
\begin{theorem}\label{thm:wamsuff}
If each of the conditions
\begin{enumerate}[(i)]
\item $\{\left\|\kappa_n\ast\varphi\right\|_\infty:n\in\mathbb{N}_0\}$ is unbounded for all $\varphi\in\ell^\infty\backslash\mathcal{O}(n^{-1})$,
\item $\mu$ is absolutely continuous, $\mathrm{supp}\;\mu=[-1,1]$, $\mu^\prime>0$ a.e. in $[-1,1]$,
\item $h(n)=\mathcal{O}(n^\alpha)$ (as $n\to\infty$) for some $\alpha\in[0,1)$,
\item $\sup_{n\in\mathbb{N}_0}\int_\mathbb{R}\!p_n^4(x)\,\mathrm{d}\mu(x)<\infty$
\end{enumerate}
holds, then $\ell^1(h)$ is weakly amenable.
\end{theorem}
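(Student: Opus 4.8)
The plan is to combine Theorem~\ref{thm:lasserweak} with an asymptotic analysis of $\kappa_n\ast\varphi$ driven by (ii), (iii) and (iv). By Theorem~\ref{thm:lasserweak}, $\ell^1(h)$ is weakly amenable as soon as $\{\|\kappa_n\ast\varphi\|_\infty:n\in\mathbb{N}_0\}$ is unbounded for every $\varphi\in\ell^\infty\backslash\{0\}$. If $\varphi\notin\mathcal{O}(n^{-1})$, this is precisely hypothesis (i); so it remains to treat $\varphi\in(\ell^\infty\cap\mathcal{O}(n^{-1}))\backslash\{0\}$. For such $\varphi$, hypothesis (iii) gives $\sum_n|\varphi(n)|^2h(n)\lesssim\sum_n n^{\alpha-2}<\infty$ (as $\alpha<1$), so $\varphi\in\ell^2(h)\backslash\{0\}$ and $F:=\mathcal{P}(\varphi)$ is a nonzero element of $L^2(\mathbb{R},\mu)$. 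Since each $\kappa_n$ lies in $c_{00}\subseteq\ell^1(h)$ and $\mathcal{P}$ intertwines convolution with pointwise multiplication of Fourier transforms, we get $\kappa_n\ast\varphi\in\ell^2(h)$ with $\mathcal{P}(\kappa_n\ast\varphi)=\widehat{\kappa_n}F=P_n^\prime F$, hence
\[
(\kappa_n\ast\varphi)(k)=\int_\mathbb{R}P_n^\prime(x)P_k(x)F(x)\,\mathrm{d}\mu(x)\qquad(k\in\mathbb{N}_0).
\]
The goal is to show that these numbers do not all remain bounded as $n\to\infty$; this suffices since $\|\kappa_n\ast\varphi\|_\infty\geq|(\kappa_n\ast\varphi)(k)|$ for each $k$.

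Here I would use the oscillatory asymptotics of the orthonormal polynomials $p_n=\sqrt{h(n)}P_n$ and of $p_n^\prime$ supplied, on compact subsets of $(-1,1)$, by (ii) together with (iv): substituting $P_n^\prime(\cos\theta)\sim\frac{n}{\sqrt{h(n)}}a(\theta)\sin(n\theta+\gamma(\theta))$ and $P_k(\cos\theta)\sim\frac{1}{\sqrt{h(k)}}b(\theta)\cos(k\theta+\gamma(\theta))$ into the integral with $k=n-m$ for a fixed $m\geq1$, using $\sin(n\theta+\gamma)\cos((n-m)\theta+\gamma)=\tfrac12[\sin((2n-m)\theta+2\gamma)+\sin m\theta]$, and discarding the rapidly oscillating term by the Riemann--Lebesgue lemma, the amplitudes $a,b$ collapse to the arcsine weight and one finds that $(\kappa_n\ast\varphi)(n-m)$ equals, to leading order, $\pm\frac{n}{\sqrt{h(n)h(n-m)}}$ times $\frac{1}{\pi}\int_{-1}^{1}U_{m-1}(x)F(x)\frac{\mathrm{d}x}{\sqrt{1-x^2}}$, where $U_{m-1}$ is the Chebyshev polynomial of the second kind. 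By (iii), $h(n)=o(n)$, so $\frac{n}{\sqrt{h(n)h(n-m)}}\to\infty$; therefore, if even one of these integrals is nonzero, $(\kappa_n\ast\varphi)(n-m)$ is unbounded and we are done. (It is exactly the failure of $h(n)=o(n)$ that blocks this argument for the Legendre polynomials, for which $h(n)=2n+1$ and weak amenability in fact fails.) Some such integral is nonzero because $\{U_{m-1}:m\geq1\}$ is a linear basis of the polynomials, which are dense in $\mathcal{C}([-1,1])$: if all of them vanished, then $F(x)/\sqrt{1-x^2}=0$ for a.e.\ $x\in[-1,1]$, hence $F=0$ a.e., contradicting $F\neq0$ in $L^2(\mathbb{R},\mu)$ together with $\mu^\prime>0$ a.e.\ from (ii).

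The main obstacle is to make the above limit rigorous. One has to split $\int_\mathbb{R}P_n^\prime P_{n-m}F\,\mathrm{d}\mu$ into a bulk part --- where the asymptotics apply and produce the stated leading term --- and endpoint parts near $\pm1$, where $P_n^\prime$ is of order $n^2$, far larger than $n/h(n)$, and where the asymptotics degenerate; one must then show that, after dividing by $n/\sqrt{h(n)h(n-m)}$, these endpoint contributions (and the Riemann--Lebesgue remainder) tend to $0$. This is where (iv) enters decisively: the uniform bound $\sup_n\int_\mathbb{R}p_n^4\,\mathrm{d}\mu<\infty$ is a flatness/equi-integrability condition --- it fails, e.g., for Legendre --- which, combined with the extra decay $\varphi(n)=\mathcal{O}(n^{-1})$ provided by (i), controls both the endpoint remainders and the integrability of $F(x)/\sqrt{1-x^2}$ needed for the limiting integrals to be well-defined. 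I expect this endpoint analysis, rather than the algebraic bookkeeping, to be the technical heart of the proof.
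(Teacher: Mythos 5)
Your reduction at the start is fine: invoking Theorem~\ref{thm:lasserweak}, disposing of $\varphi\notin\mathcal{O}(n^{-1})$ by hypothesis (i), and using (iii) to place the remaining $\varphi\in\mathcal{O}(n^{-1})\backslash\{0\}$ in $\ell^2(h)$ so that $(\kappa_n\ast\varphi)(k)=\int_\mathbb{R}P_n^\prime P_k\,\mathcal{P}(\varphi)\,\mathrm{d}\mu$ --- this matches the intended structure. The genuine gap is the analytic engine you then rely on: pointwise Szeg\H{o}-type asymptotics $p_n(\cos\theta)\sim b(\theta)\cos(n\theta+\gamma(\theta))$, and a fortiori the corresponding asymptotics for $p_n^\prime$, are \emph{not} available under hypothesis (ii). The condition $\mu^\prime>0$ a.e.\ only places $\mu$ in the Erd\H{o}s--Tur\'{a}n/Nevai class; pointwise oscillatory asymptotics with a well-defined phase $\gamma(\theta)$ require the Szeg\H{o} condition $\int_{-1}^{1}\log\mu^\prime(x)(1-x^2)^{-1/2}\,\mathrm{d}x>-\infty$, which is strictly stronger and is not assumed. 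Even in the Szeg\H{o} class such formulas hold only a.e.\ or in an $L^2$ sense and do not differentiate automatically, so the substitution for $P_n^\prime$ is a second unjustified step. On top of this, your own account leaves unresolved exactly the points where the argument could fail: the integrability of $F(x)/\sqrt{1-x^2}$ needed for the Riemann--Lebesgue step (for $F\in L^2(\mu)$ this requires $\int(\mu^\prime(x)(1-x^2))^{-1}\,\mathrm{d}x<\infty$, which does not follow from (ii)--(iv)), and the endpoint contributions where $P_n^\prime$ is of order $n^2$. As it stands the proposal is a heuristic, not a proof.

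The proof in \cite{Ka15} (this paper only cites the theorem, but sketches the mechanism in Section~\ref{sec:amenability}) deliberately avoids pointwise asymptotics. It works with the Ces\`{a}ro means $F_n=\frac{1}{n+1}\sum_{k=0}^n\mathcal{P}^{-1}(p_k^2)$, which converge strongly in $\ell^2(h)$ to a limit $F$ by the M\'{a}t\'{e}--Nevai--Totik theorem \cite{MNT87} --- a result that genuinely only needs $\mu^\prime>0$ a.e.\ together with (iii) and (iv) --- and then runs a density argument on the linear span of the translates $\{T_mF:m\in\mathbb{N}_0\}$ to conclude that $\{\|\kappa_n\ast\varphi\|_\infty\}$ is unbounded for the remaining $\varphi$. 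The averaging over $k$ is precisely what replaces the pointwise oscillation cancellation you are trying to perform by hand; if you want to salvage your route, you would either have to strengthen (ii) to the Szeg\H{o} condition (changing the theorem) or rephrase your leading-order extraction in terms of weak-$\ast$ convergence of $p_np_{n-m}\,\mathrm{d}\mu$ and of arithmetic means, which essentially reconstructs the published argument.
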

Applying Theorem~\ref{thm:wamsuff}, we obtained examples such that $\ell^1(h)$ is weakly amenable but fails to be right character amenable, cf. \cite[Section 2.5]{Ka16b} (an explicit example will be recalled at the end of the section; see B) below). The related question whether there are examples such that $\ell^1(h)$ is both weakly amenable and right character amenable but not amenable can be answered positively via little $q$-Legendre polynomials: if $P_n(x)=R_n(x;q)\;(n\in\mathbb{N}_0)$ for some $q\in(0,1)$, then $\ell^1(h)$ is not amenable since $h(n)\to\infty\;(n\to\infty)$ \eqref{eq:hlittleqleg}. However, $\ell^1(h)$ is right character amenable \cite[p. 792]{La09b} (and therefore point amenable \cite[Example 3]{La09a}). In \cite{Ka16b}, we conjectured that $\ell^1(h)$ is weakly amenable. It is clear that Theorem~\ref{thm:wamsuff} cannot be applied because both (ii) and (iii) are violated, which is an obvious consequence of \eqref{eq:mulittleqleg} and \eqref{eq:hlittleqleg} (concerning (iv), see Corollary~\ref{cor:wamlittleqleg2} below). However, weak amenability is an immediate consequence of Theorem~\ref{thm:wamlittleqleg1}:

\begin{theorem}\label{thm:wamlittleqleg2}
Let $q\in(0,1)$ and $P_n(x)=R_n(x;q)\;(n\in\mathbb{N}_0)$. Then $\ell^1(h)$ is weakly amenable.
\end{theorem}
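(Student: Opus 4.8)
The plan is to derive Theorem~\ref{thm:wamlittleqleg2} directly from Theorem~\ref{thm:wamlittleqleg1} together with a general fact: if a commutative Banach algebra $A$ is spanned by its idempotents, then $A$ is weakly amenable. Since $\ell^1(h)$ is commutative (the hypergroup convolution commutes, cf. the basics in Section~\ref{sec:intro}), by \cite{BCD87} it suffices to show that there is no nonzero bounded derivation $D:\ell^1(h)\rightarrow(\ell^1(h))^\ast=\ell^\infty$ (the dual module, acting via convolution as recalled in Section~\ref{sec:intro}).

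First I would recall the elementary observation that a bounded derivation annihilates every idempotent: if $e\in\ell^1(h)$ satisfies $e\ast e=e$, then $D(e)=D(e\ast e)=e\cdot D(e)+D(e)\cdot e=2\,e\cdot D(e)$ (using commutativity of the module action, which holds here since $\ell^1(h)$ is commutative), so $e\cdot D(e)=2\,e\cdot(e\cdot D(e))=2\,(e\ast e)\cdot D(e)=2\,e\cdot D(e)$, whence $e\cdot D(e)=0$ and therefore $D(e)=0$. Consequently $D$ vanishes on every linear combination of idempotents. By Theorem~\ref{thm:wamlittleqleg1}, the linear span of the idempotents is dense in $\ell^1(h)$, and since $D$ is bounded (hence continuous) it must vanish on all of $\ell^1(h)$. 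Thus $D=0$, and by the characterization of weak amenability for commutative Banach algebras in \cite{BCD87} this proves that $\ell^1(h)$ is weakly amenable.

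The main obstacle is already resolved upstream: the entire analytic weight of the argument sits in Theorem~\ref{thm:characternorm} and Theorem~\ref{thm:wamlittleqleg1}, i.e., in the uniform boundedness of the character norms and the resulting density of idempotents. Given those, the deduction of weak amenability is purely formal. The only point requiring a little care is making sure the ambient dual module is indeed $\ell^\infty$ acting by convolution (this is exactly the structure recalled in Section~\ref{sec:intro}, citing \cite{La07,La09b}) and that the module action is symmetric, which is immediate from the commutativity of $\ast$; no hypergroup-specific subtleties intervene. One may additionally remark, as a sanity check consistent with the discussion in Section~\ref{sec:amenability}, that this is compatible with $\ell^1(h)$ being point amenable — indeed weak amenability forces the absence of nonzero bounded point derivations — which in turn corresponds, via \cite[Theorem 1]{La09a}, to the sets $\{R_n^\prime(x;q):n\in\mathbb{N}_0\}$ being unbounded for each $x\in\widehat{\mathbb{N}_0}$; but this is not needed for the proof, only the density of idempotents is.
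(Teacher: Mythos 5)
Your proposal is correct and follows essentially the same route as the paper's primary argument: Theorem~\ref{thm:wamlittleqleg1} plus the fact that a commutative Banach algebra spanned by its idempotents is weakly amenable (which the paper cites from \cite[Proposition 2.8.72]{Da00} and you instead verify directly via the standard computation showing a bounded derivation into the symmetric dual module annihilates idempotents). Your inline verification is sound, since the dual module $\ell^\infty$ is indeed symmetric by commutativity of $\ast$, so nothing further is needed.
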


\begin{proof}
This follows from Theorem~\ref{thm:wamlittleqleg1} and the fact that every commutative Banach algebra which is spanned by its idempotents is also weakly amenable \cite[Proposition 2.8.72]{Da00}. Alternatively, there is a slightly more straightforward variant which avoids both Theorem~\ref{thm:wamlittleqleg1} and \cite[Proposition 2.8.72]{Da00} but is also based on Theorem~\ref{thm:characternorm}: in a more explicit way than in the proof of Theorem~\ref{thm:wamlittleqleg1} (because an explicit computation of $\widehat{\epsilon_0-\epsilon_1}(1-q^n)$ is simple), we see that
\begin{equation*}
\epsilon_1=\epsilon_0-\sum_{n=0}^\infty\underbrace{\widehat{\epsilon_0-\epsilon_1}(1-q^n)}_{=(q+1)q^n}\frac{\alpha_{1-q^n}}{\left\|\alpha_{1-q^n}\right\|_2^2}
\end{equation*}
is in the $\left\|.\right\|_1$-closure of the linear span of the idempotents of $\ell^1(h)$ (cf. Figure~\ref{fig:approximation}). Now let $D:\ell^1(h)\rightarrow\ell^\infty$ be a continuous derivation. Since $D$ must be zero on the idempotents \cite[Proposition 1.8.2]{Da00}, we first conclude that $D(\epsilon_1)=0$, and then, applying Proposition~\ref{prp:lasserweak}, that $D(\epsilon_n)=0$ for all $n\in\mathbb{N}_0$. Since the linear span of $\{\epsilon_n:n\in\mathbb{N}_0\}$ is dense in $\ell^1(h)$, we get $D=0$. Hence, $\ell^1(h)$ is weakly amenable.
\end{proof}

\begin{figure}
\centering
\includegraphics[width=1.00\textwidth]{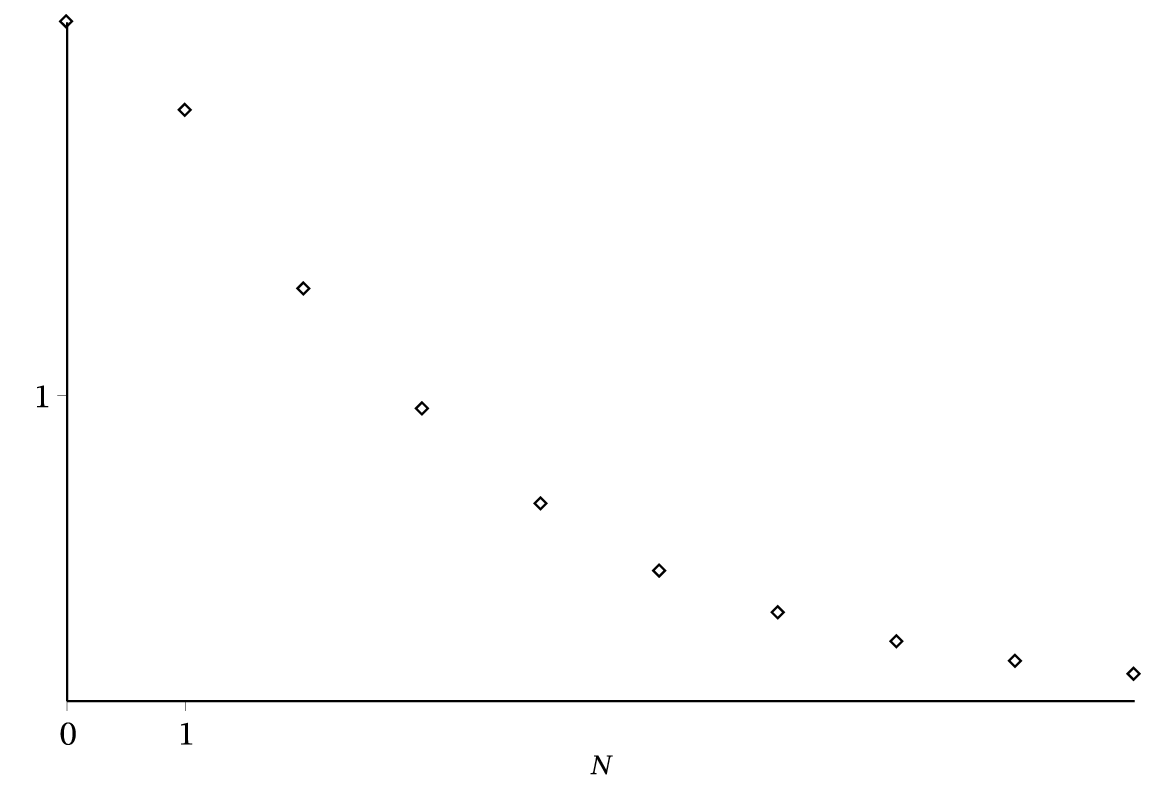}
\caption{$\left\|\epsilon_1-\left(\epsilon_0-\sum_{n=0}^N\widehat{\epsilon_0-\epsilon_1}(1-q^n)\frac{\alpha_{1-q^n}}{\left\|\alpha_{1-q^n}\right\|_2^2}\right)\right\|_1$ for $N\in\{0,\ldots,9\}$ and $q=2/3$.}\label{fig:approximation}
\end{figure}

\begin{corollary}
There exist polynomial hypergroups on $\mathbb{N}_0$ such that $\ell^1(h)$ is weakly amenable and right character amenable but not amenable: for all $q\in(0,1)$, $(R_n(x;q))_{n\in\mathbb{N}_0}$ has the desired properties.
\end{corollary}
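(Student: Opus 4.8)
The plan is to treat the three asserted properties of $\ell^1(h)$ separately, for an arbitrary but fixed $q\in(0,1)$ with $P_n(x)=R_n(x;q)\;(n\in\mathbb{N}_0)$, and then to collect them. There is no single computation to perform; the corollary is an assembly of results, two of which have already been obtained in this paper and one of which is available in the literature. The guiding observation is that the genuinely new ingredient is weak amenability, while the remaining two properties follow either directly from the explicit recurrence/Haar data or from a known reference.

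First I would record weak amenability: this is precisely the content of Theorem~\ref{thm:wamlittleqleg2}, which was established via the density of the idempotents (Theorem~\ref{thm:wamlittleqleg1}) and the uniform boundedness estimate of Theorem~\ref{thm:characternorm}. Hence nothing further is required here. Next I would verify non-amenability by invoking the exponential growth of the Haar weights. From \eqref{eq:hlittleqleg} one has
\begin{equation*}
h(n)=\frac{1}{q^n}\frac{1-q^{2n+1}}{1-q}\to\infty\quad(n\to\infty),
\end{equation*}
since $q\in(0,1)$. By the general non-amenability criterion for polynomial hypergroups \cite[Theorem 3]{La07}, the condition $h(n)\to\infty$ already forces $\ell^1(h)$ to fail to be amenable. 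Thus $\ell^1(h)$ is not amenable.

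Finally I would address right character amenability. For the little $q$-Legendre polynomial hypergroups this property is known and recorded in \cite[p. 792]{La09b}; in particular, combining it with $\mathcal{X}^b(\mathbb{N}_0)=\widehat{\mathbb{N}_0}$ for this class, it also yields point amenability \cite[Example 3]{La09a}. Collecting the three verifications then shows that $(R_n(x;q))_{n\in\mathbb{N}_0}$ induces a polynomial hypergroup on $\mathbb{N}_0$ whose $L^1$-algebra is weakly amenable and right character amenable but not amenable, for every $q\in(0,1)$, which is exactly the asserted existence statement.

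I do not expect any real obstacle in the corollary itself: the only substantial step is weak amenability, and that difficulty has already been overcome in Theorem~\ref{thm:wamlittleqleg2}. The non-amenability follows immediately from the explicit form of $h(n)$ in \eqref{eq:hlittleqleg} together with \cite[Theorem 3]{La07}, and right character amenability is cited from \cite[p. 792]{La09b}; the entire proof is therefore a short bookkeeping argument assembling these three facts.
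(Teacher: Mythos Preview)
Your proposal is correct and matches the paper's own proof, which simply says the corollary is immediate from Theorem~\ref{thm:wamlittleqleg2} together with the preliminary remarks (namely $h(n)\to\infty$ giving non-amenability via \cite[Theorem 3]{La07}, and right character amenability from \cite[p. 792]{La09b}). The only minor excess is the remark about point amenability, which is not needed for the corollary itself.
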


\begin{proof}
Immediate from Theorem~\ref{thm:wamlittleqleg2} and the preliminary remarks.
\end{proof}

Theorem~\ref{thm:wamlittleqleg2} is interesting concerning the derivatives of the little $q$-Legendre polynomials. As recalled above, the point amenability (which was already obtained in \cite[Example 3]{La09a}) yields that the set $\{P_n^{\prime}(x):n\in\mathbb{N}_0\}$ is unbounded for every $x\in\{1\}\cup\{1-q^n:n\in\mathbb{N}_0\}$ (provided $P_n(x)=R_n(x;q)\;(n\in\mathbb{N}_0)$, $q\in(0,1)$). In \cite{Ka15}, we (generally) reformulated this property via the sequence $(\kappa_n)_{n\in\mathbb{N}_0}$; applied to the little $q$-Legendre polynomials, we obtain the unboundedness of $\{\left\|\kappa_n\ast\varphi\right\|_\infty:n\in\mathbb{N}_0\}$ for all $\varphi\in\{\alpha_x:x\in\{1\}\cup\{1-q^n:n\in\mathbb{N}_0\}\}\subsetneq\ell^\infty\backslash\{0\}$. If we combine Theorem~\ref{thm:wamlittleqleg2} with Theorem~\ref{thm:lasserweak}, this improves to the following result:

\begin{corollary}
Let $q\in(0,1)$ and $P_n(x)=R_n(x;q)\;(n\in\mathbb{N}_0)$. Then $\{\left\|\kappa_n\ast\varphi\right\|_\infty:n\in\mathbb{N}_0\}$ is unbounded for all $\varphi\in\ell^\infty\backslash\{0\}$.
\end{corollary}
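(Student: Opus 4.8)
The plan is to read the statement off as a direct combination of the two main results already in hand: Theorem~\ref{thm:wamlittleqleg2}, which asserts that $\ell^1(h)$ is weakly amenable for every $q\in(0,1)$, and Theorem~\ref{thm:lasserweak}, which is an \emph{equivalence} characterizing weak amenability of $\ell^1(h)$ precisely by the unboundedness of $\{\left\|\kappa_n\ast\varphi\right\|_\infty:n\in\mathbb{N}_0\}$ for all $\varphi\in\ell^\infty\backslash\{0\}$. First I would invoke Theorem~\ref{thm:wamlittleqleg2} to know that $\ell^1(h)$ is weakly amenable. Then I would apply the ``only if'' direction of Theorem~\ref{thm:lasserweak} to conclude that the displayed set is unbounded for every $\varphi\in\ell^\infty\backslash\{0\}$. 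That is the entire argument.

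Since Theorem~\ref{thm:wamlittleqleg2} was itself derived from Theorem~\ref{thm:wamlittleqleg1} (density of the linear span of the idempotents), which in turn rests on the uniform boundedness property Theorem~\ref{thm:characternorm}, one could alternatively make the chain explicit: the convergent series representation $\epsilon_1=\epsilon_0-\sum_{n=0}^\infty(q+1)q^n\,\alpha_{1-q^n}/\left\|\alpha_{1-q^n}\right\|_2^2$ (whose absolute convergence in $\ell^1(h)$ uses Theorem~\ref{thm:characternorm}) forces any continuous derivation $D:\ell^1(h)\rightarrow\ell^\infty$ to vanish on $\epsilon_1$, hence on all $\epsilon_n$ via Proposition~\ref{prp:lasserweak}, hence to be $0$; feeding ``$D=0$ for all continuous derivations'' back into Theorem~\ref{thm:lasserweak} yields the claim. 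But there is no need to reproduce this, as it is already packaged in Theorem~\ref{thm:wamlittleqleg2}.

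There is essentially no obstacle here: the corollary is a one-line consequence, and the only thing to be careful about is citing the correct direction of Theorem~\ref{thm:lasserweak} (the characterization is stated as an ``if and only if'', so weak amenability immediately gives the unboundedness conclusion, not merely the converse). Accordingly the written proof can simply read: ``Immediate from Theorem~\ref{thm:wamlittleqleg2} and Theorem~\ref{thm:lasserweak}.''
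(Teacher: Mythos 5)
Your proposal is correct and is exactly the paper's argument: the corollary is introduced there with the sentence ``If we combine Theorem~\ref{thm:wamlittleqleg2} with Theorem~\ref{thm:lasserweak}, this improves to the following result,'' i.e., weak amenability plus the ``only if'' direction of the characterization. Nothing further is needed.
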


In view of Theorem~\ref{thm:lasserweak}, every polynomial hypergroup with weakly amenable $\ell^1(h)$ must necessarily satisfy condition (i) of our sufficiency criterion Theorem~\ref{thm:wamsuff}. We find the following concerning the remaining conditions of Theorem~\ref{thm:wamsuff}:

\begin{corollary}\label{cor:wamlittleqleg2}
There exist polynomial hypergroups on $\mathbb{N}_0$ such that $\ell^1(h)$ is weakly amenable but all of the conditions (ii), (iii) and (iv) of Theorem~\ref{thm:wamsuff} are violated: for all $q\in(0,1)$, $(R_n(x;q))_{n\in\mathbb{N}_0}$ has the desired properties.
\end{corollary}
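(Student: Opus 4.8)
The plan is to verify each of the three claimed failures (ii), (iii), (iv) separately, since weak amenability itself is already established in Theorem~\ref{thm:wamlittleqleg2}. Fix $q\in(0,1)$ and $P_n(x)=R_n(x;q)$.

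First I would dispose of (ii) and (iii), which are immediate from the explicit formulas recalled in Section~\ref{sec:intro}. For (ii): by \eqref{eq:mulittleqleg} the orthogonalization measure $\mu$ is purely discrete (supported on the countable set $\{1\}\cup\{1-q^n:n\in\mathbb{N}_0\}$), hence not absolutely continuous with respect to the Lebesgue--Borel measure; moreover $\mathrm{supp}\;\mu\neq[-1,1]$. So (ii) fails. For (iii): by \eqref{eq:hlittleqleg} one has $h(n)=q^{-n}(1-q^{2n+1})/(1-q)$, which grows exponentially in $n$; in particular $h(n)\neq\mathcal{O}(n^\alpha)$ for any $\alpha\in[0,1)$ (indeed for no polynomial rate at all). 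So (iii) fails.

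The only step with any content is (iv), namely showing $\sup_{n}\int_\mathbb{R} p_n^4(x)\,\mathrm{d}\mu(x)=\infty$, where $p_n=\sqrt{h(n)}\,P_n$. The idea is that this fourth-moment quantity is, in the hypergroup language, essentially $\|\alpha_x\|$-type data: writing the integral as $\int p_n^4\,\mathrm{d}\mu = h(n)^2\int P_n^4\,\mathrm{d}\mu$ and using the Plancherel isomorphism $\mathcal P$ together with $\epsilon_n=\mathcal P^{-1}(P_n)$, one recognizes $\int P_n^4\,\mathrm{d}\mu = \|P_n^2\|_{L^2(\mu)}^2 = \|\mathcal P^{-1}(P_n^2)\|_2^2 = \|\epsilon_n\ast\epsilon_n\|_2^2$; equivalently, expanding $P_n^2=\sum_k g(n,n;k)P_k$ and applying Plancherel--Levitan gives $\int P_n^4\,\mathrm{d}\mu = \sum_k g(n,n;k)^2 h(k)$. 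Since $g(n,n;0)=1/h(n)$, the $k=0$ term alone contributes $1/h(n)^2$, so $\int p_n^4\,\mathrm{d}\mu \geq h(n)^2\cdot h(n)^{-2}\cdot 1 = 1$ — not yet enough. To force unboundedness I would instead lower-bound a higher-$k$ term: a clean route is to use the asymptotic/decay information already available. By Lemma~\ref{lma:squarenorms}, $\|\alpha_{1-q^n}\|_2^2 = \sum_k P_k(1-q^n)^2 h(k) = 1/(q^n(1-q))$, and by \eqref{eq:characterfourier} $\alpha_{1-q^n}/\|\alpha_{1-q^n}\|_2^2 = \mathcal P^{-1}(\delta_{1-q^n})$ has Fourier transform $\delta_{1-q^n}$; hence $\int P_n^4\,\mathrm{d}\mu$ and the character norms are linked, and one expects $\int p_n^4\,\mathrm{d}\mu$ to grow like a power of $q^{-n}$. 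Concretely, I would argue that if $\sup_n\int p_n^4\,\mathrm{d}\mu<\infty$ held, then the orthonormal polynomials $p_n$ would be uniformly bounded in $L^4(\mu)$, which (via an interpolation/duality argument, or via the general machinery of \cite{La07,La09b} relating $L^p$-boundedness of $(p_n)$ to amenability-type properties) is incompatible with the exponential growth of $h(n)$; alternatively, one computes $\int p_n^4\,\mathrm{d}\mu$ directly from the discrete measure \eqref{eq:mulittleqleg} as $\sum_{j=0}^\infty h(n)^2 R_n(1-q^j;q)^4 q^j(1-q)$ and isolates the $j=n$ term, using that $R_n(1-q^n;q)=\alpha_{1-q^n}(n)$ is bounded below by a constant independent of $n$ (which follows from Lemma~\ref{lma:characterdecay}, since $(|\alpha_{1-q^n}(n+k)|)_{k\geq K}$ is decreasing from $|\alpha_{1-q^n}(n+K)|\le1$ and the values at $k<K$ are controlled, while $\alpha$ at the base point cannot be too small), to get $\int p_n^4\,\mathrm{d}\mu \gtrsim h(n)^2 q^n = q^{-n}(1-q^{2n+1})^2/(1-q)^2\cdot q^n \to\infty$.

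The main obstacle is the last estimate in (iv): one needs a \emph{uniform} lower bound on $|R_n(1-q^n;q)|=|\alpha_{1-q^n}(n)|$ (or on some single summand of $\sum_j h(n)^2 R_n(1-q^j;q)^4 q^j(1-q)$) that does not decay as $n\to\infty$, since without it the exponential factor $h(n)^2$ could in principle be cancelled. I would obtain this from Proposition~\ref{prp:asymptotics} and Lemma~\ref{lma:characterdecay}: the decay of $(\alpha_{1-q^n}(n+k))_k$ past the uniform threshold $K$ is controlled by \eqref{eq:asymptoticsuni}, and combined with $\|\alpha_{1-q^n}\|_2^2=1/(q^n(1-q))$ from Lemma~\ref{lma:squarenorms} — which says the \emph{total} weighted square mass is exactly $q^{-n}/(1-q)$, forcing the "early" coefficients $P_k(1-q^n)$, $k\le n$, to carry mass of order $q^{-n}$ — one deduces that some $|P_k(1-q^n)|$ with $k\le n+K$ is bounded below by a constant times $q^{-n/2}\cdot h(k)^{-1/2}$, and feeding this into $\int p_n^4\,\mathrm{d}\mu=h(n)^2\sum_k g(n,n;k)^2h(k)$ via the reproducing structure yields the divergence. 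Once (iv) is in hand, the corollary follows by combining it with (ii), (iii) above and Theorem~\ref{thm:wamlittleqleg2}.
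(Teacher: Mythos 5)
Your treatment of (ii) and (iii) is correct and coincides with the paper's (which dismisses them, just before the corollary, as immediate consequences of \eqref{eq:mulittleqleg} and \eqref{eq:hlittleqleg}); the only content is in (iv), and that is where your argument has a genuine gap. Your main route rests on the claim that $|R_n(1-q^n;q)|=|\alpha_{1-q^n}(n)|$ is bounded below by a positive constant independent of $n$, and you assert this ``follows from Lemma~\ref{lma:characterdecay}''. It does not: that lemma yields only \emph{upper} bounds and non-vanishing of $\alpha_{1-q^n}(n+k)$ for $k\geq K$, and says nothing about the value at $k=0$ when $K>0$; monotone decrease of $(|\alpha_{1-q^n}(n+k)|)_{k\geq K}$ from something $\leq1$ gives no lower bound whatsoever. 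The paper does prove $\liminf_{n}|P_n(1-q^n)|>0$, but only for \emph{sufficiently small} $q$, via the explicit sum \eqref{eq:characterlimitingpre} and the Leibniz criterion, and it explicitly flags that this elementary route does not cover all $q\in(0,1)$ (for $q$ near $1$ the terms $\gamma_k$ are not decreasing and the bound $\gamma_0-\gamma_1>0$ fails). Your fallback --- pigeonholing on $\left\|\alpha_{1-q^n}\right\|_2^2=q^{-n}/(1-q)$ to produce some $k\leq n+K$ with $|P_k(1-q^n)|$ of order $q^{-n/2}h(k)^{-1/2}$ --- is not carried out: the upper bounds \eqref{eq:asymptoticsuni} alone do not show that the tail $k>n+K$ fails to carry essentially all of the $\ell^2$-mass, and even granting such a $k$ you give no mechanism converting it into a lower bound for $\int p_n^4\,\mathrm{d}\mu$. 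The appeal to ``interpolation/duality'' is likewise unsubstantiated. (A side error: your Plancherel identity should read $\int P_n^4\,\mathrm{d}\mu=\sum_k g(n,n;k)^2/h(k)$, not $\sum_k g(n,n;k)^2h(k)$, since $\mathcal{P}^{-1}(P_n^2)(k)=g(n,n;k)/h(k)$.)

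The paper closes (iv) by a different, soft argument valid for all $q\in(0,1)$: assuming $\sup_n\int p_n^4\,\mathrm{d}\mu<\infty$, the functions $f_n:=\mathcal{P}^{-1}(p_n^2)$ are bounded in $\ell^2(h)$, so a subsequence converges weakly, hence pointwise, to some $f\in\ell^2(h)$; but because $\mu\in M(1,0)$, the Nevai-class limit theorems give $f_n(k)=\int p_n^2(x)P_k(x)\,\mathrm{d}\mu(x)\to P_k(1)=1$ for every $k$, forcing $f=\alpha_1\equiv1$ and contradicting $\left\|\alpha_1\right\|_2^2=\sum_k h(k)=\infty$. To salvage your hands-on approach for all $q$ you would need a uniform-in-$n$ lower bound such as $\max_{0\leq k\leq n+K}|P_k(1-q^n)|^2h(k)q^n\geq c>0$, which is exactly the missing ingredient.
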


\begin{proof}
Let $q\in(0,1)$ and $P_n(x)=R_n(x;q)\;(n\in\mathbb{N}_0)$. It is only left to show that (iv) is violated. Let $(f_n)_{n\in\mathbb{N}_0}\subseteq c_{00}$ be defined by
\begin{equation*}
f_n:=\mathcal{P}^{-1}(p_n^2),
\end{equation*}
and assume that $\sup_{n\in\mathbb{N}_0}\int_\mathbb{R}\!p_n^4(x)\,\mathrm{d}\mu(x)<\infty$. Since
\begin{equation*}
\left\|f_n\right\|_2^2=\int_\mathbb{R}\!p_n^4(x)\,\mathrm{d}\mu(x)
\end{equation*}
(Plancherel--Levitan), the set $\{\left\|f_n\right\|_2^2:n\in\mathbb{N}_0\}$ is bounded, so there has to be a subsequence $(f_{n_j})_{j\in\mathbb{N}_0}$ and a function $f\in\ell^2(h)$ such that $(f_{n_j})_{j\in\mathbb{N}_0}$ is weakly convergent to $f$ in the Hilbert space $\ell^2(h)$. In particular, $(f_{n_j})_{j\in\mathbb{N}_0}$ converges pointwise to $f$. However, by general convergence results on the Nevai class $M(1,0)$ \cite[Lemma 4.2.9, Theorem 4.2.10]{Ne79} we obtain that
\begin{equation*}
f_n(k)=\mathcal{P}^{-1}(p_n^2)(k)=\int_\mathbb{R}\!p_n^2(x)P_k(x)\,\mathrm{d}\mu(x)\to P_k(1)=1\;(n\to\infty)
\end{equation*}
for all $k\in\mathbb{N}_0$, so $(f_n)_{n\in\mathbb{N}_0}$ converges pointwise to the trivial character $\alpha_1\equiv1$. Hence, we get $\alpha_1\in\ell^2(h)$. This is a contradiction, however, because
\begin{equation*}
\left\|\alpha_1\right\|_2^2=\sum_{k=0}^\infty\lvert\alpha_1(k)\rvert^2h(k)=\sum_{k=0}^\infty h(k)=\infty
\end{equation*}
(by the same argument, $\left\|\alpha_1\right\|_2^2=\infty$ holds true for \textit{any} polynomial hypergroup).
\end{proof}

If $q$ is sufficiently small, it can be seen in a more elementary way (avoiding both \cite[Lemma 4.2.9, Theorem 4.2.10]{Ne79} and weak compactness) that condition (iv) of Theorem~\ref{thm:wamsuff} is violated: as a consequence of \cite[(2.7)]{IW82}, one has
\begin{equation}\label{eq:characterlimitingpre1}
\frac{(-1)^n q^{-\frac{n(n+1)}{2}}}{(q;q)_n^2}P_n(1-q^n)=\sum_{k=0}^n\frac{(q^{-n};q)_{n-k}q^{k^2}}{(q;q)_k^2(q;q)_{n-k}^2}\;(n\in\mathbb{N}_0).
\end{equation}
Moreover, \cite[(2.1)]{IW82} gives the transformation
\begin{equation}\label{eq:characterlimitingpre2}
(q^{-n};q)_{n-k}=(-1)^{n-k}q^\frac{(k-n)(n+k+1)}{2}\frac{(q;q)_n}{(q;q)_k}\;(n\in\mathbb{N}_0,k\in\{0,\ldots,n\}).
\end{equation}
Combining \eqref{eq:characterlimitingpre1} and \eqref{eq:characterlimitingpre2}, we get
\begin{equation}\label{eq:characterlimitingpre}
P_n(1-q^n)=\sum_{k=0}^n(-1)^k\frac{(q;q)_n^3q^{\frac{k(3k+1)}{2}}}{(q;q)_k^3(q;q)_{n-k}^2}\;(n\in\mathbb{N}_0).
\end{equation}
Now using the estimation
\begin{equation*}
\frac{(q;q)_n^3q^{\frac{k(3k+1)}{2}}}{(q;q)_k^3(q;q)_{n-k}^2}\leq\frac{q^{\frac{k(3k+1)}{2}}}{(q;q)_{\infty}^5}
\end{equation*}
and applying Lebesgue's dominated convergence theorem to \eqref{eq:characterlimitingpre}, we obtain that $\lim_{n\to\infty}P_n(1-q^n)$ exists with
\begin{equation}\label{eq:characterlimiting}
\lim_{n\to\infty}P_n(1-q^n)=(q;q)_\infty\sum_{k=0}^\infty(-1)^k\underbrace{\frac{q^{\frac{k(3k+1)}{2}}}{(q;q)_k^3}}_{=:\gamma_k}.
\end{equation}
Since
\begin{equation*}
\frac{\gamma_{k+1}}{\gamma_k}=\frac{q^{3k+2}}{(1-q^{k+1})^3}\;(k\in\mathbb{N}_0),
\end{equation*}
$(\gamma_k)_{k\in\mathbb{N}_0}$ is strictly decreasing if $q$ is sufficiently small (which shall be assumed from now on), and \eqref{eq:characterlimiting} and the classical Leibniz criterion imply that
\begin{equation}\label{eq:characterlimitingest}
\lim_{n\to\infty}P_n(1-q^n)\geq(q;q)_\infty(\gamma_0-\gamma_1)>0.
\end{equation}
Since, via \eqref{eq:mulittleqleg} and \eqref{eq:hlittleqleg},
\begin{equation*}
\frac{\int_\mathbb{R}\!p_n^4(x)\,\mathrm{d}\mu(x)}{h(n)}=h(n)\int_\mathbb{R}\!P_n^4(x)\,\mathrm{d}\mu(x)\geq h(n)P_n^4(1-q^n)q^n(1-q)=(1-q^{2n+1})P_n^4(1-q^n),
\end{equation*}
\eqref{eq:characterlimitingest} implies that
\begin{equation*}
\liminf_{n\to\infty}\frac{\int_\mathbb{R}\!p_n^4(x)\,\mathrm{d}\mu(x)}{h(n)}\geq(q;q)_\infty^4(\gamma_0-\gamma_1)^4>0.
\end{equation*}
Since $h(n)\to\infty\;(n\to\infty)$ \eqref{eq:hlittleqleg}, this implies that $\int_\mathbb{R}\!p_n^4(x)\,\mathrm{d}\mu(x)\to\infty\;(n\to\infty)$; hence, condition (iv) of Theorem~\ref{thm:wamsuff} is violated.\\

Besides the non-applicability of Theorem~\ref{thm:wamsuff} to the class of little $q$-Legendre polynomials as a whole, it is interesting to observe that also the principal underlying ideas do not work for this class (which was already observed in \cite{Ka16b}): the proof of Theorem~\ref{thm:wamsuff} given in \cite{Ka15} relies on the sequence $(F_n)_{n\in\mathbb{N}_0}\subseteq c_{00}$,
\begin{equation*}
F_n:=\frac{1}{n+1}\sum_{k=0}^n\mathcal{P}^{-1}(p_k^2),
\end{equation*}
which (under the conditions of Theorem~\ref{thm:wamsuff}) converges in an appropriate sense to a limiting function $F\in\ell^2(h)$ which carries adequate information of the underlying orthogonal polynomial sequence; this convergence arises from an increasingly rapid ``oscillation'' of the polynomials $p_n^2(x)$ around a certain weak limit (as $n$ increases), or, more precisely, due to a strong convergence result for the arithmetic means which can be found in \cite{MNT87}. Moreover, the proof of Theorem~\ref{thm:wamsuff} crucially relies on a density argument concerning the linear span of $\{T_m F:m\in\mathbb{N}_0\}$. However, for the little $q$-Legendre polynomials the sequence $(F_n)_{n\in\mathbb{N}_0}$ converges pointwise to the trivial character $\alpha_1\equiv1$ (cf. the proof of Corollary~\ref{cor:wamlittleqleg2}). This means a ``loss of information'' in two ways: on the one hand, $\alpha_1$ is not specific to the little $q$-Legendre polynomials anymore. On the other hand, the linear span of $\{T_m\alpha_1:m\in\mathbb{N}_0\}=\{\alpha_1\}$ is one-dimensional and therefore inappropriate for analogous density arguments.\\

These considerations show that, despite the weak amenability of $\ell^1(h)$, the harmonic analysis of the little $q$-Legendre polynomials is very different from polynomials which fit in Theorem~\ref{thm:wamsuff}. Concerning the latter, explicit examples are given by certain Jacobi polynomials (see \cite[Section 3]{Ka15}); to illustrate differences to the little $q$-Legendre polynomials in some more detail, we consider the following two specific cases: let $(P_n(x))_{n\in\mathbb{N}_0}\subseteq\mathbb{R}[x]$ be defined by $P_0(x)=1$, $P_1(x)=x$ and
\begin{equation*}
P_1(x)P_n(x)=(1-c_n)P_{n+1}(x)+c_n P_{n-1}(x)\;(n\in\mathbb{N})
\end{equation*}
with either
\begin{enumerate}[A)]
\item $c_n\equiv1/2$ (Chebyshev polynomials of the first kind)
\end{enumerate}
or
\begin{enumerate}[A)]
\item[B)] $c_n\equiv2n/(4n+1)$ (ultraspherical polynomials corresponding to the parameter $-1/4$).
\end{enumerate}
In both cases, the conditions of Theorem~\ref{thm:wamsuff} are fulfilled (see the proof of \cite[Corollary 3.1]{Ka15}) and $\ell^1(h)$ is therefore weakly amenable. However, in A) $\ell^1(h)$ is even amenable \cite[Corollary 3]{La07} \cite[Proposition 5.4.4]{Wo84}, whereas in B) $\ell^1(h)$ fails to be right character amenable \cite[p. 792]{La09b}. Concerning a comparison to the little $q$-Legendre polynomials, recall that the latter yield non-amenable but right character amenable $\ell^1(h)$.\\

Theorem~\ref{thm:wamlittleqleg2} is also interesting when comparing the little $q$-Legendre polynomials to their limiting cases, which are the Legendre polynomials $(\Lambda_n(x))_{n\in\mathbb{N}_0}$. One has
\begin{equation*}
\lim_{q\to1}R_n(x;q)=\Lambda_n(2x-1)\;(n\in\mathbb{N}_0,x\in\mathbb{R}),
\end{equation*}
where $(\Lambda_n(x))_{n\in\mathbb{N}_0}$ is orthogonal w.r.t. the absolutely continuous measure $\chi_{(-1,1)}(x)\,\mathrm{d}x$ and $\Lambda_0(x)=1$, $\Lambda_1(x)=x$ and
\begin{equation*}
\Lambda_1(x)\Lambda_n(x)=\frac{n+1}{2n+1}\Lambda_{n+1}(x)+\frac{n}{2n+1}\Lambda_{n-1}(x)\;(n\in\mathbb{N})
\end{equation*}
\cite{KLS10,La05}. Comparing the $L^1$-algebra which corresponds to $(R_n(x;q))_{n\in\mathbb{N}_0}$ with the $L^1$-algebra which corresponds to $(\Lambda_n(x))_{n\in\mathbb{N}_0}$ (of course, the latter is identical with the $L^1$-algebra that corresponds to $(\Lambda_n(2x-1))_{n\in\mathbb{N}_0}$ because the linearization coefficients $g(m,n;k)$ coincide), one obtains that the behavior w.r.t. point amenability and amenability coincides (see \cite{La07,La09a,La09b} concerning these amenability properties for $(\Lambda_n(x))_{n\in\mathbb{N}_0}$), whereas the behavior w.r.t. weak amenability and right character amenability differs and the two latter properties get lost when passing to the limit $q\to1$ (see \cite{La07,La09c,La09b} concerning $(\Lambda_n(x))_{n\in\mathbb{N}_0}$).

\bibliography{bibliographylittleqlegendre}
\bibliographystyle{amsplain}

\end{document}